\newtheorem{thm}{Theorem}[section]
\newtheorem{lemma}[thm]{Lemma}
\newtheorem{prop}[thm]{Proposition}
\theoremstyle{definition}
\newtheorem{remark}[thm]{Remark}
\newcommand\R{{\mathbb R}}
\newcommand\C{{\mathbb C}}
\newcommand\N{{\mathbb N}}
\newcommand\Z{{\mathbb Z}}
\newcommand\zero{\textbf{0}}
\newcommand\su{_{\scriptscriptstyle U}}
\newcommand\pr{^{\scriptstyle \R}}
\newcommand\po{^{\scriptscriptstyle O}}
\newcommand\pu{^{\scriptscriptstyle U}}
\newcommand\sr{_{\scriptscriptstyle \R}}
\renewcommand\sc{_{\scriptscriptstyle \C}}
\def\crr{^{\scriptscriptstyle {\it CR}}}
\newcommand\CR{{\mathcal CR}}
\newcommand{\sm}[4]{       \bigl( \begin{smallmatrix} 
					{#1} & {#2} \\ {#3} & {#4}
                   		    \end{smallmatrix} \bigr)         }
\newcommand{\smv}[2]{       \bigl( \begin{smallmatrix} 
					{#1} \\ {#2}
                   		    \end{smallmatrix} \bigr)         }
\newcommand{\smh}[2]{       \bigl( \begin{smallmatrix} 
					{#1} & {#2}
                   		    \end{smallmatrix} \bigr)         }
\newcommand\id{\mathrm{id}}
\newcommand{\diag}{\mathrm{diag}}
\newcommand\snf{ \mathcal{S}}
\begin{document}

\thispagestyle{empty}

\vspace{-2cm}

\title[$K$-theory for higher rank graph algebras]{Real and complex $K$-theory for higher rank graph algebras arising from cube complexes} 

\begin{abstract} Using the Evans spectral sequence and its counter-part for real $K$-theory, we compute both the real and complex $K$-theory of several infinite families of $C \sp *$-algebras based on higher-rank graphs of rank $3$ and $4$. The higher-rank graphs we consider arise from double-covers of cube complexes. By considering the real and complex $K$-theory together, we are able to carry these computations much further than might be possible considering complex $K$-theory alone. As these algebras are classified by $K$-theory, we are able to characterize the isomorphism classes of the graph algebras in terms of the combinatorial and number-theoretic properties of the construction ingredients.
\end{abstract}

\author{Jeffrey L. Boersema}	
\address{Seattle University \\ Department of Mathematics \\
Seattle, Washington 98133, USA}
\email{boersema@seattle.edu}

\author{Alina Vdovina}
\address{The City College of New York and Graduate Center, CUNY \\ Department of Mathematics \\ New York, 10031, USA}
\email{avdovina@ccny.cuny.edu}

\maketitle

\thispagestyle{empty}


\section{Introduction} 
Higher rank graphs were defined in \cite{KP} and their further theory was developed in \cite{RSY} and \cite{HRSW}.
The main motivation was a systematic study of a large class of $C^*$-algebras classifiable by their $K$-theory.
In spite of the vast literature on the subject, explicit computations of the K-theory of the higher-rank $C^*$-algebras is very rare, especially in rank three and higher.
The first rank three example was done in \cite{Evans}, and the first infinite series of rank three and higher examples were described in \cite{MRV}.
Nevertheless, in both \cite{Evans} and \cite{MRV}, there were open questions on exact order of certain abelian subgroups in K-theory.
In this paper we present several infinite series of $C \sp *$-algebras associated to rank-3 and rank-4 graphs and we compute their K-theory completely and explicitly.  

Not only are we considering the (complex) $C \sp *$-algebras associated to these higher rank graphs, we are also considering real $C \sp *$-algebras for these graphs and we are computing the real $K$-theory. Real $C \sp *$-algebras associated to a higher rank graph, and more generally real $C \sp *$-algebras associated to a higher rank graph with an involution, were introduced in \cite{BG}. The analog of Evans' spectral sequence was also developed in \cite{BG} to compute the real $K$-theory of such algebras. The examples that we consider in this paper are rank-3 and rank-4 graphs with two vertices and a non-trivial involution that swaps the two vertices. We will be calculating the $K$-theory of both real $C \sp *$-algebras: the one associated with the graph with the trivial involution and the one associated with the graph with the non-trivial involution. Previous calculations of $K$-theory for real $C \sp *$-algebras of higher rank graphs have been conducted in \cite{BG} and \cite{BBG}. However, in those cases the graphs either had rank no more than 2, or the graphs could be factored as a product of graphs with rank no more than 2.  Remarkably, we find that the consideration of the real $K$-theory also allows us to compute the (complex) $K$-theory in some cases where that were otherwise intractable. In particular, we are able to resolve the open question in \cite{MRV} mentioned above, and also correct a technical mistake in the description of the $K$-theory results in some cases, in Section~6 of \cite{MRV}.

The complex $C \sp *$-algebras associated to our higher rank graphs fall in the category of purely infinite simple $C \sp *$-algebras classified by $K$-theory in \cite{EK} and \cite{NCP}. Similarly, the real $C \sp *$-algebras in this paper fall in the category of purely infinite simple $C \sp *$-algebras classified in \cite{BRS}. Based on this, we will be able to characterize the isomorphism classes of the resulting algebras in terms of the combinatoric and number-theory properties of the construction ingredients.

The higher-rank graphs that we consider in this paper arise from cube complexes, and their double covers, as in Section 6 of \cite{MRV}. We will review this construction in the next section. We will also review the key preliminary notions, including the definition of the real and complex $C \sp *$-algebras based on higher rank graphs, real $K$-theory, and the spectral sequence technology to calculate the $K$-theory in the real case. 

The geometric core of higher-rank graphs was introduced in \cite{LV}. Initially the higher-rank graphs were defined as small categories in \cite{KP}.
Connections of higher rank graphs with geometry and combinatorics were known before, see, for example, \cite{HRSW}, the combinatorial analogue without reference to category theory was done
only in \cite{LV}. The higher-dimensional digraphs introduced in \cite{LV}  provide a bridge between cube complexes and higher-rank graphs.

The automorphism groups of cube complexes covered by products of trees induce automorphism groups of higher-rank graphs. If the fundamental group of such a cube complex
has a subgroup of index two, then the double-cover of the cube complex has an involution. An involution on the cube complex yields a real structure on the $C \sp *$-algebra associated with to the corresponding higher rank graph.

The topic of $K$-theory for C*-algebras is rich with connections to other topics in mathematics.
For example, in \cite{LSV}, the complex K-theory of C*-algebras is used as an invariant of higher-dimensional Thompson groups which are otherwise very hard to distinguish.  
Also, Matui's HK-conjecture, formulated in \cite{Matui}, proposes an isomorphism between the two complex $K$-groups $K_i(C \sp *_r(\mathcal{G}))$ of an \' etale groupoid $\mathcal{G}$ and the homology groups $\oplus_n H_{2n+i}(\mathcal{G})$. This conjecture has generated quite a bit of interest, and has been shown to hold widely, but not in all generality (see for example, \cite{Deeley} and  \cite{FKPS}). 
The results of this paper indicate that the further development of both real and complex $K$-theory together may shed additional light on these positive connections.

The rest of this paper is organized as follows. Section~\ref{prelim} contains the preliminaries, with a review of rank-$k$ graphs in Section~\ref{p1}  and a review of the construction of the specific family of rank-$k$ graphs that we will investigate in this paper in Section~\ref{p2}. Then in Section~\ref{p3} we discuss the real $C \sp *$-algebra associated to a rank-$k$ graph, and the distinct real $C \sp *$-algebra that one obtains from a rank-$k$ graph with a non-trivial involution. This is followed by a review of $K$-theory in the context of real $C \sp *$-algebras in Section~\ref{p4}.

In subsequent sections, we carry out the $K$-theory calculations for the families of $C \sp *$-algebras under consideration. Specifically, in Section~\ref{3-T} we consider the rank-3 graphs with no involution and in Section~\ref{3-NT} we consider the real $C \sp *$-algebras associated to the same rank-3 graphs with the associated non-trivial involution.  In Section~\ref{4} we consider the rank-4 case, with both the trivial and the non-trivial involution. Section~\ref{NTL} is an appendix containing some number theory results that are needed in the earlier calculations.

\section{Preliminaries} \label{prelim}

\subsection{Higher rank graphs}\label{subs: higher rank graphs} \label{p1}
We recall the definition of a $k$-graph due to Kumjian and Pask \cite{KP}.
For an integer $k \ge 1$, we view $\N^k$ as a monoid under pointwise addition. A $k$-graph is a countable small category $\Lambda$ together with an
assignment of a \emph{degree} $d(\mu)\in\N^k$ to every morphism $\mu\in\Lambda$
such that for all $\mu,\nu,\pi\in \Lambda$ the following hold
\begin{enumerate}
\item $d(\mu\nu)=d(\mu)+d(\nu)$; and
\item whenever $d(\pi)=m+n$ for $m,n \in \N^k$, there is a unique factorisation $\pi=\mu\nu$
    such that $d(\mu)=m$ and $d(\nu)=n$.
\end{enumerate}
Condition~(2) is known  as the \emph{factorisation property} in the $k$-graph. The composition in $\mu\nu$ is understood in the sense of morphisms, thus the source $s(\mu)$ of $\mu$ equals the range $r(\nu)$ of $\nu$. Note that the morphisms of degree $0$ (in $\N^k$) are necessarily the identity morphisms in the category. Denote this set by $\Lambda^0$, and refer to its elements  as \emph{vertices} of $\Lambda$. With $e_1,\dots,e_k$ denoting the generators of $\N^k$, the set  $\Lambda^{e_i}=\{\lambda\in \Lambda\mid d(\lambda)=e_i\}$ consists of edges (or morphisms) of degree $e_i$, for  $i=1,\dots,k$. We write $v\Lambda^n$ for the set of morphisms of degree $n\in \N^k$ with range $v$.

Throughout this paper we are concerned with $k$-graphs where $\Lambda^0$ and all $\Lambda^{e_i}$, $i=1,\dots, k$, are finite. A $k$-graph $\Lambda$ so that $0<\#v\Lambda^n<\infty$ for all $v\in \Lambda^0$ and all $n\in \N^k$ is source free and row-finite. 
The \emph{adjacency matrices} $M_1, \dots, M_k\in\operatorname{Mat}_{\Lambda^0}(\N)$ of $\Lambda$
are $\Lambda^0\times \Lambda^0$ matrices with
\[
M_i(v,w) = |v\Lambda^{e_i}w|.
\]
By the factorisation property, the matrices $M_i$ pairwise commute for $i=1,\dots ,k$. 

\subsection{Rank-$k$ graphs with two vertices} \label{const} \label{p2}

We now review the specific construction of two-vertex $k$-graphs involving cube complexes discussed in Section 6 of \cite{MRV}. 
The construction consists of two steps: 
First, we construct a family of cube complexes with two vertices, covered by products of $k$ trees,
and second, we explain how to get a $k$-graph from each complex. These $k$-graphs happen to have a natural non-trivial involution $\gamma$, which will be important later on. For the background on cube complexes covered by products of $k$ trees, see \cite{LV}, \cite{MRV},  and \cite{RSV}.

\noindent{\bf Step 1.} Let $X_1,...,X_k$ be distinct alphabets such that 
$\left| X_i \right|=m_i$ for $m_i \geq 2$ and $k \geq 1$. Write 
$$X_i=\{x_1^i,x_2^i,...,x_{m_i}^i\}.$$ 
Let $F_i$ be the free group generated by $X_i$.
Then the direct product $$G=F_1\times F_2 \times \ldots \times F_k$$ of $k$ free groups $F_1$,$F_2$,...,$F_k$ has a presentation 
$$G=\langle X_1,X_2,...,X_k \mid [x^i_s,x^j_l]=1, i \neq j=1,...,k; s=1,...,m_i;l=1,...,m_j \rangle,$$ where $[x,y]$ means commutator $xyx^{-1}y^{-1}$.
The group $G$ acts simply transitively on a Cartesian product $\Delta$ of $k$ trees $T_1,T_2,..,T_k$ of valencies $2m_1,2m_2,...,2m_k$ respectively.
The quotient of this action is a cube complex $P$ with one vertex such that the universal cover of $P$ is $\Delta$. The edges of the cube complex $P$ 
are naturally equipped with orientations and labellings by elements of $X=X_1\cup X_2...\cup X_k$ and the $1$-skeleton of $P$ is a wedge of $\sum_{i=1}^k m_i$ circles.
We construct a family of double covers
of $P$ in the following way. A double cover $P^2$ of $P$ has two vertices, say $v_1$ and $v_2$. For each edge $x$ in $P$ there are two edges, say $x_1$ and $x_2$, in the $1$-skeleton of $P^2$. In fact there are two choices for the structure of these edges: either both $x_1$ and $x_2$ are loops, one at $v_1$ and the other at $v_2$; or one of the edges $x_1,x_2$ points from $v_1$ to $v_2$ and the other points from $v_2$ to $v_1$.
We will say that the edge pair $x_1, x_2$ has {\em type one} in the first case, and has {\em type two} otherwise. 

For example, in Figure~1, the edge pair $b_1, b_2$ is type 1 and the edge pair $a_1, a_2$ is type 2. Figures 1,2,3,4,5 show that our double covers are well defined. 

\begin{figure}[!htbp]{Fig. 1}

\includegraphics[height=3cm]{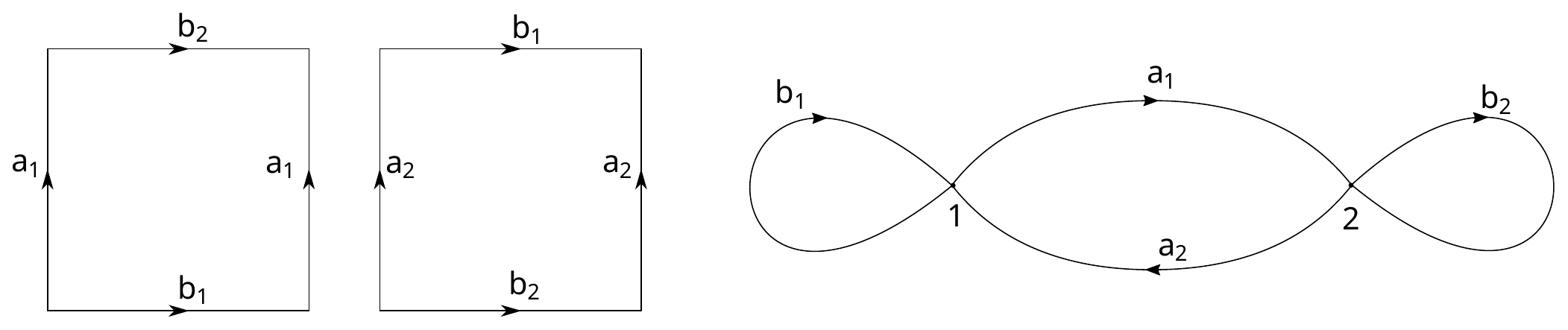}

\end{figure}

In the double covers we consider, we stipulate that at least one edge pair has type two (so the graph is connected) and that all of the edge pairs with labels in the same set $X_i$ will have the same type.

\begin{figure}[!htbp]{Fig. 2}

\includegraphics[height=4cm]{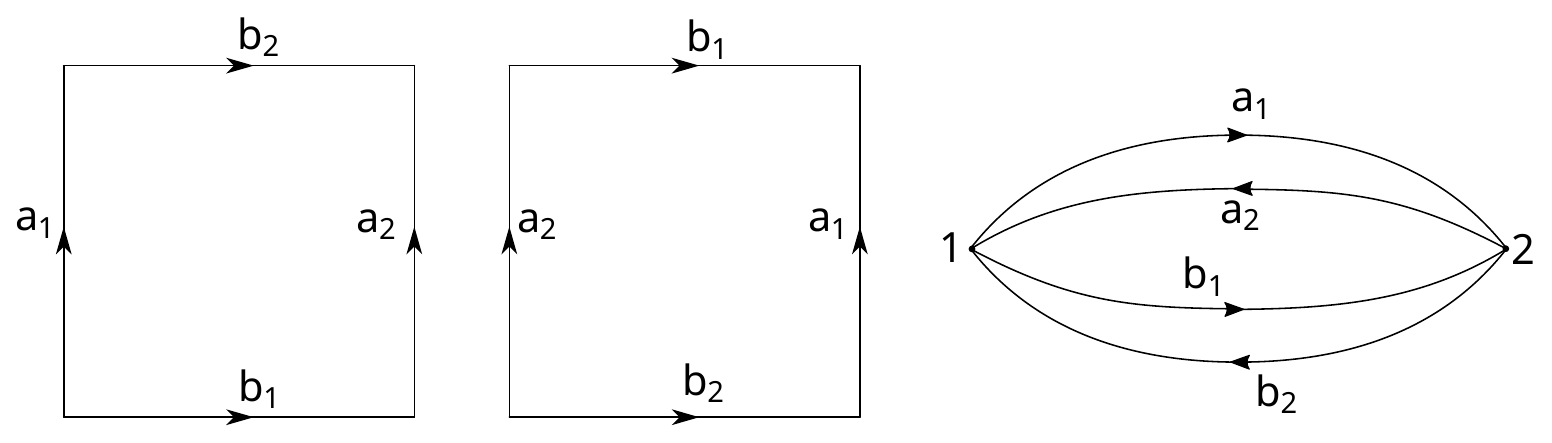}

\end{figure}

\begin{figure}[!htbp]{Fig. 3}

\includegraphics[height=4cm]{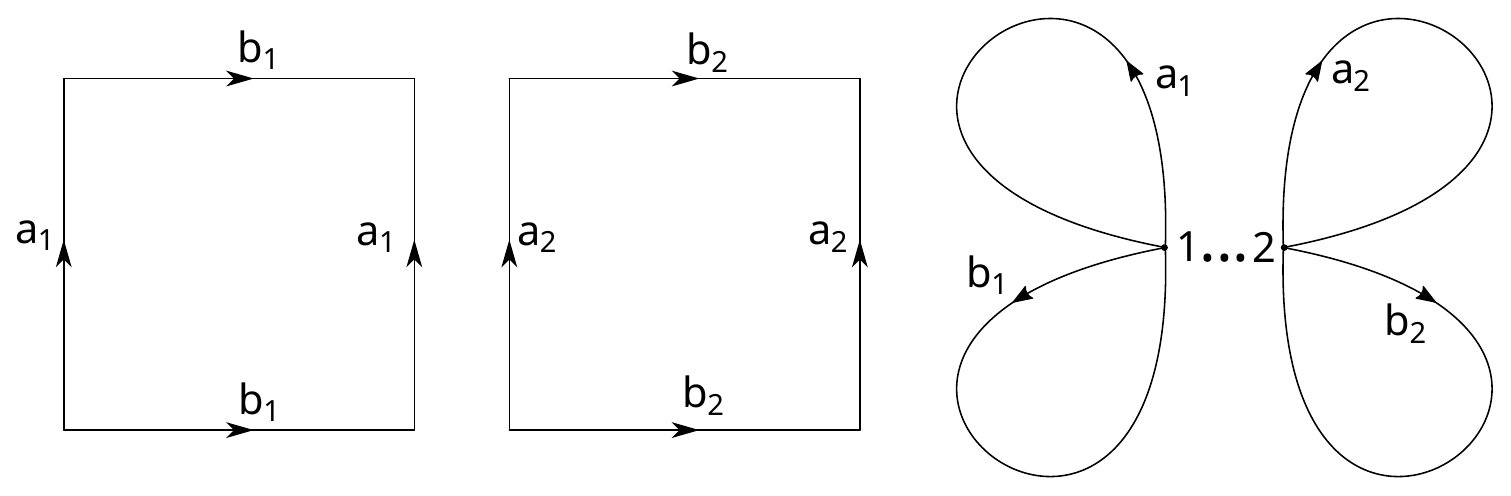}

\end{figure}

\begin{figure}[!htbp]{Fig. 4}

\includegraphics[height=4cm]{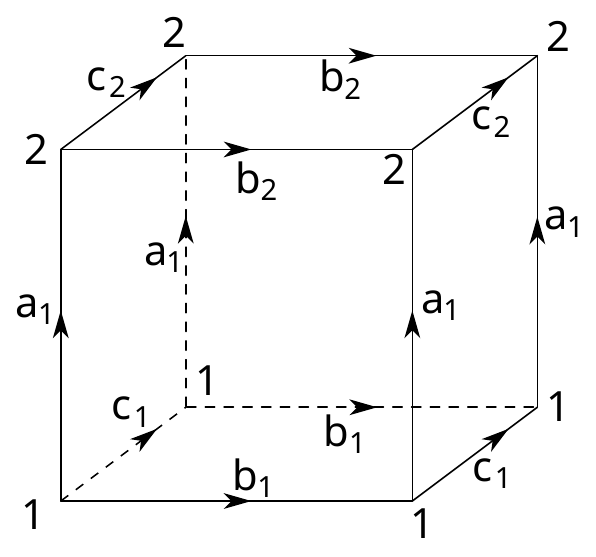}

\end{figure}

\begin{figure}[!htbp]{Fig. 5}

\includegraphics[height=4cm]{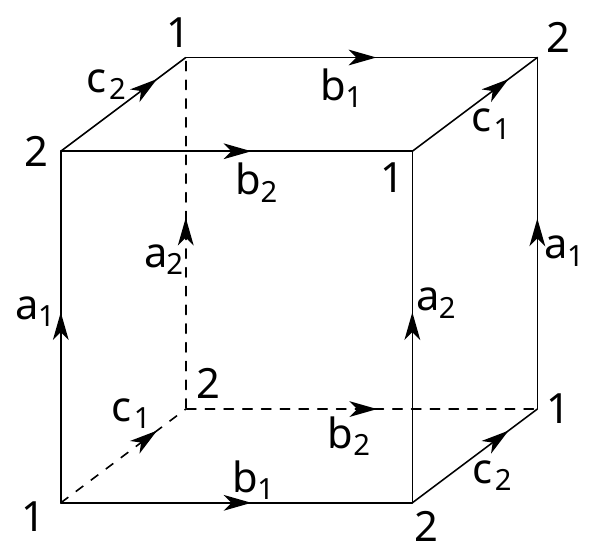}

\end{figure}

\noindent{\bf Step 2.} We explain now how to construct a $k$-graph $C$ from the cube complex $P^2$. The graph $C$ will have the same set of vertices as $P^2$, but the number of edges will double. Specifically, for each edge $x$ in $P^2$, we obtain two edges $x$ and $x'$ where $s(x) = r(x')$ and $s(x') = r(x)$. Furthermore, the degree of $x$ and $x'$ is $e_i$, descending from the labels associated from the edges of $\Delta$ (colloquially we say that the edges $x$ and $x'$ have color $i$).
 Each geometric square $abcd$ in $P^2$ will give rise to four squares (or commutativity relations) in $C$:
 namely, $$ab=d^{\prime}c^{\prime}, bc=a^{\prime}d^{\prime}, cd=b^{\prime}a^{\prime}, da=c^{\prime}b^{\prime}.$$ 
 For example, the square $a_1b_2a_2^{-1}b_1^{-1}$ (the front face of the cube in Fig. 5) will give rise to four squares in $C$: 
 namely, 
 $$a_1b_2=b_1a_2, ~b_2a_2^{\prime}=a_1^{\prime}b_1, ~a_2^{\prime}b_1^{\prime}=b_2^{\prime}a_1^{\prime}, ~b_1^{\prime}a_1=a_2b_2^{\prime}.$$
 
 This completes the description of the construction of a large collection of examples of rank-$k$ graphs.
 In the rest of this paper we will consider graphs that arise from this construction, restricting our attention to the ones in which $P$ has one vertex, so that the rank-$k$ graph $C$ has two vertices (and the number of edges is a multiple of 4).
 
 To proceed, we need to define two special kinds of matrices, 
$$D_i=\begin{bmatrix} 2m_i&0\\ 0&2m_i \end{bmatrix} \text{~and~}
T_i = \begin{bmatrix} 0&2m_i\\ 2m_i&0 \end{bmatrix}
.$$
If the edges in $P$ associated with color $i$ have lifts in $P^2$ that are type 1, then the adjacency matrix of the rank-$k$ graph $C$ will have the form $D_i$ for $i \in \{1,2, \dots, k\}$. 
If the lifts are of type 2, then the  adjacency matrix will have the form $T_i$.

\subsection{Real and Complex $C \sp *$-algebras}  \label{p3}
From these higher-rank graphs, we construct real and complex $C \sp *$-algebras, following \cite{BG} and \cite{KP}, as follows.
For any row-finite source-free k-graph $\Lambda$, $C \sp *(\Lambda)$ is the universal complex $C \sp *$-algebra generated by partial isometries $s_\lambda$, for $\lambda \in \Lambda$, subject to the relations
\begin{enumerate}
\item For each $v \in \Lambda^0$, $s_v$ is a projection, and $s_v s_w = \delta_{v,w} s_v$.
\item For each $\lambda \in \Lambda, \ s_\lambda^* s_\lambda = s_{s(\lambda)}$.
\item For each $\lambda, \mu \in \Lambda, \ s_\lambda s_\mu = s_{\lambda \mu}$.
\item For each $v \in \Lambda^0$ and each $n \in \N^k$, 
$\displaystyle s_v = \sum_{\lambda \in v\Lambda^n} s_\lambda s_\lambda^*.$
\end{enumerate}

The real $C \sp *$-algebra $C \sp * \sr (\Lambda)$ is the universal real $C \sp *$-algebra generated by the same partial isometries $s_\lambda$ as above subject to the same relations. We can and do typically represent $C \sp * \sr(\Lambda)$ as the real subalgebra of $C \sp *(\Lambda)$ generated by $s_\lambda$. 
Thus $C \sp * \sr (\Lambda)$ is the closure of the set of all real linear combinations of products of $s_\lambda$ and $s_\lambda^*$.

In addition, there is a obvious involution $\gamma$ on the graph $\Lambda$ that interchanges the two vertices and interchanges pairs of edges in a way consistent with the action on the vertices. In this situation, there is a different real $C \sp *$-algebra $C\sr \sp * (\Lambda, \gamma)$, associated to this graph with involution, as constructed in \cite{BG}, which is represented as the real $C \sp *$-algebra in $C \sp *(\Lambda)$ generated by the elements of the form $z s_\lambda + \overline{z} s_{\gamma(\lambda)}$ for $z \in \C$.

The two real $C \sp *$-algebras $C \sp * \sr (\Lambda)$ and $C \sp * \sr (\Lambda, \gamma)$ are both real structures associated with $C \sp *(\Lambda)$, in the sense that the complexification of each one is isomorphic to the complex $C \sp *$-algebra  $C \sp * (\Lambda)$. A typical problem in the theory of real $C \sp *$-algebras is to identify up to isomorphism all of the real structures associated with a given complex $C \sp *$-algebra.
The constructions of these real $C \sp *$-algebras depend on the integer values of $m_i$ (for $i \in \{1,2, \dots, k\}$), on the choices of the type of lifts for each $i$ (that is the form of the adjacency matrices $M_i$), and the choice of whether we are considering the real $C \sp *$-algebra $C \sp * \sr(\Lambda)$ or $C \sp * \sr(\Lambda, \gamma)$. 

\subsection{K-theory} \label{p4}

In our work, we will use the abbreviated version of united $K$-theory $K\crr(A)$ that was introduced in \cite{Boersema02} for real $C \sp *$-algebras. From Theorem~10.2 of \cite{BRS}, this invariant classifies the category of real purely infinite simple $C \sp *$-algebras consisting of exactly those real $C \sp *$-aglebras whose complexifications fall under the classification theorem for complex purely infinite simple $C \sp *$-algebras, by Kirchberg and Phillips in \cite{EK} and \cite{NCP}.
This category includes all of the real graph algebras we will consider in this paper.
Specifically, for a real $C \sp *$-algebra $A$ we define
$$
K\crr(A) = \{ KO_* (A), KU_*(A)\} 
$$
where $KO_*(A)$ is the standard 8-periodic real $K$-theory for a real $C \sp *$-algebra and $KU_*(A) = K_*(\C \otimes\sc A)$ is the standard 2-periodic $K$-theory of the complexification of $A$. The invariant $K\crr(A)$ also includes the natural transformations 
\begin{align*}
r_i &\colon KU_i(A) \rightarrow KO_i(A) && \text{induced by the standard inclusion }  \mathbb{C} \rightarrow M_2(\mathbb{R})  \\
c_i & \colon KO_i(A) \rightarrow KU_i(A) && \text{induced by the standard inclusion }  \mathbb{R} \rightarrow \mathbb{C} \\
\psi_i &\colon KU_i(A) \rightarrow KU_i(A) &&  \text{induced by conjugation }  \mathbb{C} \rightarrow \mathbb{C}  \\
\eta_i &\colon KO_i(A) \rightarrow KO_{i+1}(A) &&  \text{induced by multiplication by $\eta \in KO_1(\R) = \Z_2$} \\
 \xi_i &\colon KO_i(A) \rightarrow KO_{i+4}(A) &&  \text{induced by multiplication by $\xi \in KO_4(\R) = \Z$. } 
\end{align*}

The additional structure tends to aid in the computations of $KO_*(A)$ because the natural transformations satisfy the relations
\begin{align*}
&rc=2 && cr=1+\psi && 2\eta =0 \\
&r\psi=r && \psi^2=\id && \eta^3=0 \\
&\psi c=c && \psi\beta\su =-\beta\su\psi && \xi = r\beta\su^2c \\
\end{align*}
and they fit into a long exact sequence
\[
\cdots \xrightarrow{r\beta\su^{-1}} KO_i(A) \xrightarrow{\eta} KO_{i+1}(A) \xrightarrow{c} 
	KU_{i+1}(A) \xrightarrow{r\beta\su^{-1}} KO_{i-1}(A) \xrightarrow{\eta} \cdots 
	\label{eq:LES}
	\]
where $\beta\su$ is the Bott periodicity map on complex $K$-theory.
The target category of the functor $K\crr(-)$ is the category of all $\mathcal{CR}$-modules.

To compute $K\crr(C \sp * \sr(\Lambda, \gamma))$, we will use the spectral sequence of \cite[Theorem 3.13]{BG}, which generalizes the spectral sequence of \cite{Evans} for complex $K$-theory. 

The $E^2$ page of the spectral sequence arises from the homology of a certain chain complex $\mathcal{C}$, which is based on the $\mathcal{CR}$-modules $K\crr(\R)$ and $K\crr(\C)$ and relies on the combinatorial data of $\Lambda$ and $\gamma$. We will review the details of the formation of this spectral sequence in our calculations in the following sections. The spectral sequence converges to $K\crr( C \sp * \sr (\Lambda, \gamma))$ in the sense that there is a filtration of $K\crr( C \sp * \sr (\Lambda, \gamma))$, the subfactors of which appear as the groups of the $E^\infty$ page. Specifically, the groups $KO_n( C \sp * \sr (\Lambda, \gamma))$ and $KU_n( C \sp * \sr(\Lambda, \gamma))$ are obtained from the groups $(E^\infty_{p,q})\po$ and $(E^\infty_{p,q})\pu$where $p + q = n$.

This spectral sequence exists in the category of $\mathcal{CR}$-modules, which means that it has both a real component and a complex component, as alluded to above, and these components are connected by the natural transformations including $r$ and $c$. This is the case on each page of the spectral sequence starting with the chain complex $\mathcal{C}$ and the natural transformations commute with the differentials $d$. The complex component of this spectral sequence coincides with the spectral sequence of Evans in \cite{Evans}.

For reference, the groups of $K\crr(\R)$ and $K\crr(\C)$ are shown below from Tables~1 and 2 of \cite{BG}.

\begin{figure}[!htbp]{Fig. 6 -- $K\crr( \R )$}
$$\begin{array}{|c|c|c|c|c|c|c|c|c|}  
\hline \hline  
n & \makebox[1cm][c]{0} & \makebox[1cm][c]{1} & 
\makebox[1cm][c]{2} & \makebox[1cm][c]{3} 
& \makebox[1cm][c]{4} & \makebox[1cm][c]{5} 
& \makebox[1cm][c]{6} & \makebox[1cm][c]{7}  \\

\hline  \hline
KO_n(\R )
& \Z  & \Z_2   & \Z_2 & 0 
&   \Z   & 0 & 0 & 0  \\
\hline  
KU_n(\R)
& \Z & 0 & \Z & 0 & \Z & 0 & \Z  & 0  \\
\hline \hline
c_n & 1 & 0 & 0 & 0 & 2 & 0 & 0 & 0   \\
\hline
r_n & 2 & 0 & 1 & 0 & 1 & 0 & 0 & 0       \\
\hline
\psi_n & 1 & 0 & -1 & 0 &  1 & 0 & -1 & 0    \\
\hline
\eta_n & 1 & 1 & 0 & 0 & 0 & 0 & 0 & 0     \\
\hline \hline
\end{array}$$
\end{figure}

\begin{figure}[!htbp]{Fig. 7 -- $K\crr( \C )$}
$$\begin{array}{|c|c|c|c|c|c|c|c|c|}  
\hline \hline  
n & \makebox[1cm][c]{0} & \makebox[1cm][c]{1} & 
\makebox[1cm][c]{2} & \makebox[1cm][c]{3} 
& \makebox[1cm][c]{4} & \makebox[1cm][c]{5} 
& \makebox[1cm][c]{6} & \makebox[1cm][c]{7}  \\

\hline  \hline
KO_n(\C))
& \Z  & 0  & \Z & 0 
& \Z  & 0  & \Z & 0  \\
\hline  
KU_n(\C)
& \Z^2 & 0 & \Z^2 & 0 & \Z^2 & 0 & \Z^2 & 0  \\
\hline \hline
c_n & \smv{1}{1} & 0 & \smv{1}{-1} & 0 & \smv{1}{1} & 0 & \smv{1}{-1} & 0   \\
\hline
r_n & \smh{1}{1} & 0 & \smh{1}{-1} & 0 & \smh{1}{1} & 0 & \smh{1}{-1} & 0    \\
\hline
\psi_n & \sm{0}{1}{1}{0} & 0 &  \sm{0}{-1}{-1}{0}  & 0 & \sm{0}{1}{1}{0} & 0 &  \sm{0}{-1}{-1}{0}  & 0   \\
\hline
\eta_n & 0 & 0 & 0 & 0 & 0 & 0 & 0 & 0     \\
\hline \hline
\end{array}$$

\end{figure}


\section{The rank-3 case, with no involution} \label{3-T}

Let $\Lambda$ be a rank-3 graph of the form discussed in Section~\ref{const}.
Specifically, $\Lambda$ is a two-vertex graph and 
the incidence matrices $M_i$ for $\Lambda$ each have the form
$$D_i = \begin{bmatrix} 2m_i & 0 \\ 0 & 2 m_i \end{bmatrix}
	\quad \text{or} \quad
T_i = \begin{bmatrix} 0 & 2 n_i \\ 2n_i & 0 \end{bmatrix} 
$$
for $i = 1,2,3$
(with the restriction that at least one of the incidence matrices must have the form $T_i$).
From \cite{MRV}, we have the complex $K$-theory $KU( C \sr \sp* (\Lambda) = K_*( C \sp * (\Lambda))$ given by
\begin{align*}
K_*( C \sp * (\Lambda )) &= 0 && \text{if $g = 1$} \\
K_0( C \sp * (\Lambda )) &= \text{some extension of $\Z_g$ by $\Z_g$} && \text{if $g \geq 3$} \\
K_1(C \sp *(\Lambda)) &= \Z_g^2 && \text{if $g \geq 3$.}
\end{align*}
However, in \cite{MRV} the nature of the extension was not determined. Furthermore, in the cases where more than one of the matrices $M_i$ has the off-diagonal form, the formula for $g$ in \cite{MRV} is incorrect. In this section, we will compute both $KO_*(C \sr \sp *(\Lambda))$ and $KU_*(C \sr \sp * (\Lambda))$ (thereby determining the previously unknown extension), and we correct the formula for $g$.

Here we define $g$ as follows:
\begin{enumerate}
\item If $M_1 = T_1, M_2 = D_2, M_3 = D_3$ then
$$g = \gcd(1 - 4n_1^2, 1 - 2m_2, 1-2m_3) \; .$$
\item If $M_1 = T_1, M_2 = T_2, M_3 = D_3$ then
$$g = \gcd(1 - 4n_1^2, 1-4n_2^2, 1-4n_1 n_2, 1-2m_3) \; .$$
\item If $M_1 = T_1, M_2 = T_2, M_3 = T_3$ then
$$g = \gcd(1 - 4n_1^2, 1-4n_2^2, 1 - 4n_3^2,1-4n_1 n_2, 1-4n_1 n_3, 1-4n_2 n_3) \; .$$
\end{enumerate}
This formula for $g$ agrees with \cite{MRV} (Proposition 6.2) in case (a), but is a correction in cases (b) and (c).

\begin{prop} \label{calc1}
For a rank-3 graph as described above, $K\crr( C \sr \sp *( \Lambda))$ is given by the table below, for $g \geq 3$.
\end{prop}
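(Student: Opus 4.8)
The plan is to run the real-and-complex Evans spectral sequence of \cite[Theorem 3.13]{BG} for the rank-3 graph $\Lambda$ with the \emph{trivial} involution, so that the complex component is exactly Evans' spectral sequence from \cite{Evans}, and the two components are glued by the natural transformations $r$, $c$, $\psi$, $\eta$. First I would write down the $E^1$ (or chain-complex) page: since $\Lambda$ has two vertices, each $\mathcal{CR}$-module in the complex $\mathcal{C}_p$ is a direct sum of copies of $K\crr(\R)$ indexed by $p$-dimensional "faces," and the differentials are built from the matrices $\1 \pm M_i$ acting on $\Z^2$-summands (here $M_i \in \{D_i, T_i\}$). Concretely, for a rank-3 graph the chain complex has length $3$, with $\mathcal{C}_0, \mathcal{C}_1, \mathcal{C}_2, \mathcal{C}_3$ of ranks $1,3,3,1$ in terms of $K\crr(\R)^2$-blocks, and the maps are the usual Koszul-type alternating sums of $1 - M_i$ (and for the real part also the twisted maps reflecting how $\eta$ interacts). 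I would compute the homology of this complex as a $\mathcal{CR}$-module; the key arithmetic input is that $\det(\1 - D_i) = (1-2m_i)^2$, $\det(\1 - T_i) = (1 - 2n_i)(1+2n_i) = 1 - 4n_i^2$, and the various $2\times 2$ products give the mixed terms $1 - 4 n_i n_j$, $1 - 2 m_j$ appearing in the definition of $g$. This is where the corrected formula for $g$ (cases (b), (c)) comes from — the earlier formula in \cite{MRV} presumably mishandled the off-diagonal products when two or more $M_i = T_i$.

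Next I would identify the $E^2 = E^\infty$ page. Because the chain complex has length $3$ and lives in a narrow strip, all higher differentials $d^r$ for $r \ge 2$ vanish for degree reasons (the complex component of this is exactly the collapse observed by Evans in rank $3$), so $E^2 = E^\infty$. Reading off $KU_*$ then recovers the \cite{MRV} answer $K_1 = \Z_g^2$ and $K_0 = $ an extension of $\Z_g$ by $\Z_g$. The real part gives a list of $\mathcal{CR}$-module extension problems: each $KO_n(C\sr\sp*(\Lambda))$ is assembled from at most two nonzero $E^\infty_{p,q}$ terms, each of which is $KO_*$ of a torsion $\mathcal{CR}$-module built from $\Z_g$'s with various $\eta$- and $\psi$-actions. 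To resolve these extensions I would use the long exact sequence relating $KO_*$ and $KU_*$, $\cdots \to KO_i \xrightarrow{\eta} KO_{i+1} \xrightarrow{c} KU_{i+1} \xrightarrow{r\beta\su^{-1}} KO_{i-1} \to \cdots$, together with the already-known $KU_*$ and the computed maps $r, c, \psi$ on the $E^\infty$ level; in good cases the exactness pins down both the group and the extension, and in particular determines the extension of $\Z_g$ by $\Z_g$ that $K_0$ was only known to be "some extension" of.

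The main obstacle I anticipate is exactly the extension problem — both the additive extension for $K_0(C\sp*(\Lambda))$ (is it $\Z_{g^2}$, $\Z_g^2$, or something in between, depending on divisibility properties of $g$ and the $n_i, m_i$?) and the $\mathcal{CR}$-module extensions on the real side, where the $\eta$-action can be genuinely nonsplit. The strategy for the hardest step is to exploit the \emph{rigidity} that comes from having both theories at once: the relations $rc = 2$, $cr = 1 + \psi$, $r\psi = r$, $\eta^3 = 0$ and the long exact sequence massively constrain which $\mathcal{CR}$-module can sit in a given filtration, often leaving a unique possibility even when complex $K$-theory alone leaves a one-parameter family — this is the paper's central point, and it is what lets us finish a computation that was "otherwise intractable." Where the relations are not quite enough, I would fall back on a direct functoriality argument: map in or out of a model algebra (e.g. a rank-$\le 2$ subgraph or a Cuntz-algebra building block) whose $K\crr$ is known, and use naturality of the spectral sequence to transport the extension data. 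Finally I would assemble the answers into the table, double-checking $8$-periodicity of $KO_*$ and consistency of $r$, $c$ with the displayed $KU_*$.
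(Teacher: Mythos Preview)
Your framework is right --- run the BG spectral sequence with trivial involution, chain complex of length~$3$, and $E^2=E^\infty$ for degree reasons --- but you misdescribe the real chain complex and, as a result, over-engineer the rest of the argument.

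For the trivial involution the real differentials are \emph{not} twisted: the same integer matrices $\partial_i$ (built only from $I-M_i^T$, no ``$\pm$'') act on $KO_q(\R)^2$ in every degree $q$. The crucial observation you miss is that $g$ is automatically odd (each of $1-2m_i$, $1-4n_i^2$, $1-4n_in_j$ is odd). In degrees $q=1,2$ the groups are $\Z_2^2$, and since the Smith normal forms of the $\partial_i$ have diagonal entries $1$ and $g$ with $g$ odd, those complexes are exact. Hence the real $E^2$ page is nonzero only in rows $q=0$ and $q=4$; with columns $0\le p\le 3$, each diagonal $p+q=n$ meets at most \emph{one} nonzero entry, and $KO_*(C\sp*\sr(\Lambda))$ is read off directly with no extension problems whatsoever. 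The real-side extensions you anticipate, and plan to resolve via the long exact sequence, simply do not arise.

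For the one genuine extension $0\to\Z_g\to KU_0\to\Z_g\to 0$, the paper's argument is more direct than the LES/relations route you sketch. Since the real and complex chain complexes coincide in degree~$0$ (with $c_0$ implementing the identification), the map $c\colon(E^\infty_{2,0})\po\to(E^\infty_{2,0})\pu$ is an isomorphism. In the filtration diagram along $p+q=2$,
\[
\xymatrix{
0 \ar[r] & 0 \ar[r] \ar[d] & KO_2(C\sp*\sr(\Lambda)) \ar[r]^-{\cong} \ar[d]^c & (E^\infty_{2,0})\po \ar[d]^c_{\cong} \ar[r] & 0\\
0 \ar[r] & \Z_g \ar[r] & KU_2(C\sp*\sr(\Lambda)) \ar[r] & (E^\infty_{2,0})\pu \ar[r] & 0
}
\]
the right-hand square produces a section of the bottom row, whence $KU_0\cong KU_2\cong\Z_g^2$. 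Your $rc=2$ idea would also finish the job once $KO_*$ is known (since $2$ is a unit in $\Z_g$), but as written your plan tries to solve the real and complex extensions simultaneously via the LES; without first recognising that the real side is extension-free, that plan has no anchor and risks circularity.
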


Note that if $g = 1$, then $K\crr(C \sr \sp *(\Lambda)) = 0$ in all degrees.

$$K\crr(C \sp * \sr(\Lambda))$$
$$\begin{array}{|c|c|c|c|c|c|c|c|c|}  
\hline \hline  
n & \makebox[1cm][c]{0} & \makebox[1cm][c]{1} & 
\makebox[1cm][c]{2} & \makebox[1cm][c]{3} 
& \makebox[1cm][c]{4} & \makebox[1cm][c]{5} 
& \makebox[1cm][c]{6} & \makebox[1cm][c]{7}  \\

\hline  \hline
KO_n(C \sp * \sr (\Lambda))
& \Z_g  & \Z_g^2   & \Z_g & 0 
& \Z_g  & \Z_g^2   & \Z_g & 0  \\
\hline  
KU_n(C \sp * \sr (\Lambda))
& \Z_g^2 & \Z_g^2 & \Z_g^2 & \Z_g^2 & \Z_g^2 & \Z_g^2 & \Z_g^2 & \Z_g^2    \\
\hline \hline
\end{array}$$

\begin{proof} The graph $\Lambda$ has two vertices. So, following \cite{BG}, we
set $\mathcal{A} = K\crr(\R) \oplus K\crr(\R)$ and consider the chain complex
\[ 0 \rightarrow \mathcal{A} \xrightarrow{\partial_3} \mathcal{A}^3 \xrightarrow{\partial_2} 
				\mathcal{A}^3 \xrightarrow{\partial_1} \mathcal{A} \rightarrow 0 \]
the homology of which gives the $E^2$ page of a spectral sequence which converges to $K\crr( C \sp * (\Lambda))$.

The complex part of this chain complex in degree 0 is exactly the chain complex analyzed in the proof of Proposition 6.3 of \cite{MRV}, specifically we have
\begin{equation}  \label{chaincomplex1}
	0 \rightarrow \Z^2 \xrightarrow{\partial_3} \Z^6 \xrightarrow{\partial_2}
				\Z^6 \xrightarrow{\partial_1} \Z^2 \rightarrow 0 
\end{equation}
where 
\begin{align*}
\partial_1 &= \begin{bmatrix} I - M_1^T & I - M_2^T & I - M_3^T \end{bmatrix} \\
\partial_2 &= \begin{bmatrix} -(I - M_2^T) & -(I - M_3^T) & 0  \\
					I - M_1^T & 0 & -(I - M_3^T) \\
					0 & I - M_1^T & I - M_2^T &  \end{bmatrix} \\
\partial_3 &=\begin{bmatrix} I - M_3^T \\ -(I - M_2^T) \\ I - M_1^T \end{bmatrix}  \; .
\end{align*}
We refer to Lemma~\ref{snf1} at the end of this section the calculation of the Smith normal forms of these matrices, which come out to the following:
\[ 
\snf(\partial_1) = \snf( \partial_3)^T =  \begin{bmatrix} 1 & 0 & 0 & 0 & 0 & 0 \\ 0 & g & 0 & 0 & 0 & 0 \end{bmatrix} \\
\quad \text{and} \quad
\snf(\partial_2) = \diag(1, 1, g, g, 0, 0)  \; .
\]
Thus
the homology of this chain complex is
$H_*(\mathcal{C}) = (\Z_g, \Z_g^2, \Z_g, 0)$ in degrees $p = 0,1,2,3$.

The real part of this chain complex has period 8. In degrees 0 and 4, it is identical to the complex part of the chain complex shown above in (\ref{chaincomplex1})
with the same partial maps, so the homology is the same.
The real part of this chain complex in degrees 1 and 2 consists of $2$-torsion subgroups
\[ 0 \rightarrow \Z_2^2 \xrightarrow{\partial_3} \Z_2^6 \xrightarrow{\partial_2}
				\Z_2^6 \xrightarrow{\partial_1} \Z_2^2 \rightarrow 0 \]
but the matrices describing the partials are the same as above, modulo 2.
Since $g$ is odd, the chain complex is exact and the homology vanishes.

Therefore, the $E^2$ page of the spectral sequence of $K\crr( C \sp * (\Lambda))$ looks like the following, in the real and complex parts.

	\begin{gather*}
			\text{ \underline{ $E^2_{p,q} $ (for $g$ odd)  }}\\
			\def\vvline{\hfil\kern\arraycolsep\vline\kern-\arraycolsep\hfilneg}
			\begin{array}{ ccccc }
				\multicolumn
				{5}{c}{ \underline{ \text{real part}}} \\
				\vspace{.25cm} \\
				\vdots \vvline & \hspace{.3cm} \vdots \hspace{.3cm} & \hspace{.3cm} \vdots \hspace{.3cm} 
						& \hspace{.3cm} \vdots \hspace{.3cm} & \hspace{.1cm} \vdots \hspace{.1cm} \\
				7  \vvline & 0 & 0  & 0 & 0  \\
				6  \vvline & 0 & 0  & 0 & 0 \\
				5   \vvline & 0 & 0  & 0 & 0 \\
				4   \vvline  &  \Z_{g} &  \Z_g^2 & \Z_g & 0  \\
				3   \vvline & 0 & 0  & 0  & 0 \\
				2   \vvline & 0 & 0  & 0  & 0 \\
				1  \vvline & 0 & 0  & 0 & 0 \\
				0  \vvline  &  \Z_{g} &  \Z_g^2 & \Z_g  & 0 \\  \hline
				~ \vvline & 0 & 1 & 2 & 3
			\end{array}
			\hspace{3cm}
			\begin{array}{ ccccc }
				\multicolumn
				{5}{c}{ \underline{ \text{complex part}}} \\
				\vspace{.25cm} \\
				\vdots  \vvline & \hspace{.3cm} \vdots \hspace{.3cm} & \hspace{.3cm} \vdots \hspace{.3cm} 
						& \hspace{.3cm} \vdots \hspace{.3cm} & \hspace{.1cm} \vdots \hspace{.1cm}  \\
				7 \vvline  & 0    &  0 &  0  & 0 \\
				6    \vvline  &  \Z_{g} &  \Z_g^2 & \Z_g & 0  \\
				5  \vvline  & 0    &  0 &  0 & 0   \\
				4   \vvline  &  \Z_{g} &  \Z_g^2 & \Z_g & 0 \\
				3   \vvline  & 0    &  0 &  0  & 0  \\
				2    \vvline  &   \Z_{g} &  \Z_g^2 & \Z_g & 0\\
				1   \vvline  & 0    &  0 &  0  & 0  \\
				0   \vvline  &  \Z_{g} &  \Z_g^2 & \Z_g & 0 \\ \hline
				~ \vvline & 0 & 1 & 2 & 3
			\end{array}
		\end{gather*}

Then the structure of this spectral sequence implies that there are no non-trivial differentials. Therefore $E_{p,q}^2 = E_{p,q}^\infty$ in both the real and complex part. Furthermore, in the real case, there is never more than one non-trivial group along a single diagonal $p + q = i$ (for $i$ fixed), so there are no non-trivial extension problems for $KO_i( C \sp * \sr (\Lambda))$. Thus the real $K$-theory is as shown in the table. For the complex part, we get (repeating what was obtained in \cite{MRV}) that
$KU_0( C \sp * \sr (\Lambda))$ is an extension of $\Z_g$ by $\Z_g$ and $KU_1( C \sp * \sr (\Lambda )) \cong \Z_g^2$.

It remains to show that $KU_0( C \sp * \sr (\Lambda)) \cong \Z_g^2$. 
We make use of the natural transformation 
$c \colon KO_i( C \sp * \sr (\Lambda)) \rightarrow KU_i( C \sp * \sr (\Lambda))$ which can be traced back from the spectral sequence as follows.

The map $c$ on $\mathcal{A}$ commutes with the chain maps $\partial_i$ and induces the map $c \colon (E^2_{p,q})\po \rightarrow (E^2_{p,q})\pu$. The map $c$ on each page of the spectral sequence induces the map $c$ on the following pages, and ultimately on the $E^\infty$ page. Finally the map $c \colon KO_i( C \sp * \sr (\Lambda)) \rightarrow KU_i( C \sp * \sr (\Lambda))$ commutes with the filtrations and on each subfactor is equal to the map $c$ obtained on the $E^\infty$ page. 

The real and complex parts of $\mathcal{A}$ are isomorphic in degree 0 and the complexification map $c_0$ on $\mathcal{A}$ actually implements this isomorphism, as seen in Figure~6. Furthermore, the maps $\partial_i$ are the same and commute with $c_0$. Thus $c$ is an isomorphism from the first row of the spectral sequence for  $KO_0( C \sp * \sr (\Lambda))$ to the first row of the spectral sequence for $KU_0( C \sp * \sr (\Lambda))$.

We now focus on the filtration of the $E^\infty$ page giving $KO_2( C \sp * \sr (\Lambda))$ and $KU_2( C \sp * \sr (\Lambda))$. Since $c$ commutes with the filtration we obtain the following diagram.
\[ \xymatrix{
0 \ar[r] &  (E^\infty_{0,2})\po \ar[r] \ar[d]^c
& KO_2(C \sp * \sr (\Lambda))  \ar[r] \ar[d]^c
&  (E^\infty_{2,0})\po  \ar[d]^c \ar[r]
& 0 \\
0 \ar[r] & (E^\infty_{0,2})\pu\ar[r] 
& KU_2(C \sp * \sr (\Lambda)) \ar[r] 
& (E^\infty_{2,0})\pu  \ar[r] 
& 0 \;  \\
} \;
\]
which can be rewritten as
\[ \xymatrix{
0 \ar[r] & 0 \ar[r] \ar[d]^c
& KO_2(C \sp * \sr (\Lambda)) \ar[r] \ar[d]^c
& \Z_g  \ar[d]^c \ar[r]
& 0 \\
0 \ar[r] & \Z_g \ar[r] 
& KU_2(C \sp * \sr (\Lambda)) \ar[r] 
& \Z_g  \ar[r] 
& 0 \;  \\
} \;
\]
Now since the vertical map $c$ on the right is an isomorphism, and the horizontal map
from $KO_2(  C \sp * \sr (\Lambda))$ is an isomorphism, the exact sequence on the bottom has a splitting. This proves that $KU_2 (C \sp * \sr (\Lambda)) \cong \Z_g^2$, and by periodicity $KU_i (C \sp * \sr (\Lambda)) \cong \Z_g^2$ for all even $i$.
\end{proof}

\begin{remark}
Compare this result with the calculations in \cite{Boersema02} of $K\crr(\mathcal{O}_n \pr)$ where $\mathcal{O}_n \pr$ is the real Cuntz algebra. We see that
the $\CR$-module $K\crr( C \sp * \sr (\Lambda))$ decomposes as a direct sum with four summands, each of which is isomorphic to 
$K\crr(\mathcal{O}_{g+1} \pr)$ or a certain suspension thereof.
Specifically,
$$K\crr( C \sp * \sr (\Lambda)) \cong K\crr(\mathcal{O}_{g+1} \pr) 
	\oplus (\Sigma^{-1} K\crr(\mathcal{O}_{g+1} \pr))^2 
		\oplus \Sigma^{-2} K\crr(\mathcal{O}_{g+1} \pr) \; .$$
\end{remark}

\begin{remark}
In the special case that $M_1 = T_1$, $M_2 = D_2$, $M_3 = D_3$,  the graph $\Lambda$ decomposes as a product graph (in the sense of Kumjian-Pask) of rank-1 graphs. Specifically, we have
$$\Lambda = \Lambda_1 \times \Lambda_2 \times \Lambda_3$$
where $\Lambda_1$ is a graph with two vertices and $2n_1$ edges from each vertex to the other;
and $\Lambda_2$, $\Lambda_3$ are graphs with 1 vertex and $2m_i$  loops.
Therefore,
$$C \sp *(\Lambda) = C \sp *(\Lambda_1) \otimes C \sp * (\Lambda_2) \otimes C \sp * (\Lambda_3) \; .$$
It can further be shown that all factors in this product are isomorphic to Cuntz algebras. Namely
$C \sp *(\Lambda_1) \cong \mathcal{O}_{4n_1^2 -1}$ and $C \sp * (\Lambda_i) \cong \mathcal{O}_{2m_i-1}$ (for $i = 2,3$).
Similarly, at the level of real $C \sp *$-algebras we have
$$C \sp * \sr (\Lambda) = C \sp * \sr (\Lambda_1) \otimes C \sp * \sr (\Lambda_2) \otimes C \sp * \sr (\Lambda_3) \; $$
where $C \sp * \sr (\Lambda_1) \cong \mathcal{O}\pr _{4n_1^2 -1}$ and $C \sp * \sr (\Lambda_i) \cong \mathcal{O}\pr_{2m_i-1}$ (for $i = 2,3$).

Therefore, this spectral sequence calculation above has given us an approach to calculating the $K$-theory of these products which is alternative to using the K\"unneth formula. The K\"unneth formula can be difficult when there are more than 2 factors and when there is torsion involved. This is especially true for the real case.

Also, in the more general case (without restriction on the forms of $M_i$), we find {\it a posteriori} (from Proposition~\ref{calc1}) that the $K$-theory depends only on the value of $g$. Using the classification theorems for purely infinite simple $C \sp *$-algebras (see the manuscripts of Kirchberg \cite{EK} and Phillips \cite{NCP} in the complex case and the work of the first author and collaborators in the real case \cite{BRS}) it follows that the isomorphism classes of $\Lambda$ depend only on the value of $g$. Therefore, in all cases the $C \sp *$-algebra $C \sp *(\Lambda)$ is isomorphic to an appropriate product of three Cuntz algebras (one with the same value of $g$) and similarly the real $C \sp *$-algebra $C \sp * \sr(\Lambda)$ is isomorphic to a product of three real Cuntz algebras.
\end{remark}

\begin{lemma} \label{snf1}
The Smith normal form of the matrices $\partial_1$, $\partial_2$, $\partial_3$ (in the complex part in degree 0) are equal to
\[ 
\snf(\partial_1) = \snf( \partial_3)^T =  \begin{bmatrix} 1 & 0 & 0 & 0 & 0 & 0 \\ 0 & g & 0 & 0 & 0 & 0 \end{bmatrix} \\
\quad \text{and} \quad
\snf(\partial_2) = \diag(1, 1, g, g, 0, 0)  \; .
\]
\end{lemma}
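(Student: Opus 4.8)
The plan is to compute the three Smith normal forms directly from the structure of the matrices $I - M_i^T$, exploiting the fact that each $M_i$ is one of the two very special $2\times 2$ matrices $D_i = \diag(2m_i, 2m_i)$ or $T_i = \smd{0}{2n_i} $ — wait, more precisely $T_i = \smh{0}{2n_i}{2n_i}{0}$ written out, so that $I - D_i^T = (1-2m_i) I$ is a scalar matrix and $I - T_i^T = \smh{1}{-2n_i}{-2n_i}{1}$ has determinant $1 - 4n_i^2$. In every case $\det(I - M_i^T)$ is an odd integer, which I will call $a_i$ (so $a_i = (1-2m_i)^2$ in the diagonal case and $a_i = 1 - 4n_i^2$ in the off-diagonal case), and more relevantly each block $I - M_i^T$ has at least one entry equal to $1$; this is the key to pulling out unimodular pivots. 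I would treat $\partial_1$ first, then remark that $\partial_3 = \partial_1^T$ up to sign and reordering of the blocks so that $\snf(\partial_3) = \snf(\partial_1)^T$ follows immediately, and finally handle $\partial_2$.

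For $\partial_1 = \smh{I-M_1^T}{I-M_2^T}{\phantom{}}{I-M_3^T}$ (a $2\times 6$ matrix, three $2\times 2$ blocks side by side), the first step is to locate a unit entry. At least one of the blocks, say the one coming from an off-diagonal matrix $T_j$ (and there is always at least one such block by hypothesis), contributes a $1$ on the diagonal of $I - M_j^T$; even a diagonal block $I - D_i^T = (1-2m_i)I$ is coprime to some off-diagonal block's entries. Using a column operation to create a $1$ in position $(1,1)$ (possible because $\gcd$ of all the entries in the first row is $1$ — indeed $1 - 2n_j$ and $1$ appear among them when block $j$ is off-diagonal, and these are coprime; in general the first row of $\partial_1$ consists of the first rows of the $I - M_i^T$, whose entries have gcd $1$), I clear the rest of the first row by column operations and clear the first column by a row operation. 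What remains is a $1 \times 5$ matrix whose entries are integer combinations of the original entries; the gcd of these entries is exactly the quantity that Lemma~\ref{snf1} calls $g$. This last claim — that the gcd of the surviving row equals the number-theoretic $g$ defined case-by-case before the Proposition — is the heart of the argument, and I expect it to be the main obstacle: it requires checking, in each of the three cases (a), (b), (c), that the ideal of $\Z$ generated by all the $2\times 2$ minors and entries of $\partial_1$ (equivalently, by the appropriate determinantal divisors) is generated by the listed gcd. This is a finite but slightly fiddly computation with the explicit $2\times 2$ blocks; I would push the details into the case analysis, using that $\det(I-T_i^T)(I-T_j^T)^{-1}$-type cross terms produce exactly the mixed quantities $1 - 4n_i n_j$ appearing in the formula. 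From $d_1(\partial_1) = 1$ and $d_2(\partial_1) = g$ (the first two determinantal divisors) the Smith form $\smh{1}{0\;0\;0\;0\;0}{0}{g\;0\;0\;0\;0}$ follows.

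For $\partial_2$, which is a $6 \times 6$ matrix built from three rows and three columns of $2\times 2$ blocks (with the $\pm(I - M_i^T)$ pattern shown), I would again first extract two unimodular pivots: the $(2,1)$ and $(3,2)$ block positions each contain a full copy of $I - M_1^T$ or its shifts, and arguing as above one finds $d_1 = d_2 = 1$. For the next two determinantal divisors I compute that $d_3 = d_4 = g$ by the same gcd-of-minors bookkeeping; one clean way is to observe that after clearing the two unit pivots, $\partial_2$ reduces to a $4\times 4$ matrix that is block-equivalent to a direct sum reflecting the relations among the $I - M_i^T$, and the $4\times 4$ minor (the determinant of that reduced matrix) vanishes — giving $d_6 = 0$ — while the $3\times 3$ minors generate $g$. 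Finally $d_5 = 0$ as well since the rank of $\partial_2$ over $\Q$ is $4$ (the chain complex $\Z^2 \to \Z^6 \to \Z^6 \to \Z^2$ is exact after tensoring with $\Q$, as the alternating sum of ranks is $0$ and the end maps have rank $1$ each over $\Q$ once $g \neq 0$). Assembling, $\snf(\partial_2) = \diag(1,1,g,g,0,0)$. Throughout, I would keep the computations organized by the three cases for the pattern of $D$'s and $T$'s, since the value of $g$ is defined separately in each case and the minor computations differ accordingly; the rank statements and the extraction of unit pivots are uniform and can be stated once.
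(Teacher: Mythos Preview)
Your approach is essentially the paper's: compute the determinantal divisors case by case from the explicit $2 \times 2$ blocks, then invoke the number-theoretic simplifications (recorded in the paper as Lemmas~\ref{number1} and~\ref{number3}) to identify the resulting gcd with the quantity $g$. For $\partial_1$ and $\partial_3$ your outline is correct; the observation that at least one block $I - T_j^T$ contributes a literal $1$ on its diagonal makes $d_1(\partial_1) = 1$ immediate, and the case analysis you defer for $d_2(\partial_1) = g$ is exactly the work the paper carries out.

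There are, however, two concrete errors in your treatment of $\partial_2$. First, you assert $d_3 = d_4 = g$, but for the Smith form $\diag(1,1,g,g,0,0)$ the determinantal divisors must be $d_3 = g$ and $d_4 = g^2$ (the invariant factors are the successive ratios $d_i/d_{i-1}$). Second, your argument that $\partial_2$ has rank $4$ over $\mathbb{Q}$ is flawed on two counts: the end maps $\partial_1$ and $\partial_3$ have rank $2$ over $\mathbb{Q}$, not $1$ (both diagonal entries $1$ and $g$ of their Smith forms are nonzero), and in any case the vanishing of the Euler characteristic together with the ranks of the outer maps does not by itself force exactness of a four-term complex. A clean fix is available: since $M_1 = T_1$ in every case, $\det(I - M_1^T) = 1 - 4n_1^2 \neq 0$, so $I - M_1^T$ is invertible over $\mathbb{Q}$, and the Koszul complex for the three commuting operators $I - M_i^T$ is then contractible over $\mathbb{Q}$; this yields the rank statement. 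The paper instead argues directly that each $4 \times 4$ minor of $\partial_2$ is a product of at least two of the $2 \times 2$ minors of $\partial_1$, giving $d_4 = g^2$.
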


\begin{proof}
We note that the proof in Case (1) is correct in \cite{MRV}.

In Case (2), proceed as in the proof of Lemma 6.1 of \cite{MRV} where it is necessary to compute the Smith normal form of
$$\partial_1 = \begin{bmatrix} 1 & -2n_1 & 1 & -2n_2 & 1-2m_3 & 0 \\ -2n_1 & 1 & -2n_2 & 1 & 0 & 1 - 2m_3 \end{bmatrix} \; .$$
The list of the $2 \times 2$ minors (up to sign) of $\partial_1$ is
\begin{equation} \begin{gathered} 1 - 4n_1^2, ~1 - 4n_2^2, ~1 -4n_1 n_2, ~2(n_1 -n_2)  \\
 1-2m_3, ~(1 -2m_3)^2, ~2n_1(1-2m_3), ~2n_2(1-2m_3)
\end{gathered} \end{equation} 

As we are interested in the gcd of this list, we can clearly reduce everything on the second line to just $1-2m_3$.
Furthermore, by Lemma~\ref{number1} we can eliminate the last entry of the first row. Hence the $\gcd$ of the $2 \times 2$ minors is $g$ and
$\snf(\partial_1) = \diag(1,g)$. The result for $\partial_3$ is the same (up to transpose).

Now we consider $\partial_2$, where
$$\partial_2 = \begin{bmatrix} -1 &2n_2 & -(1-2m_3) & 0  & 0 & 0  \\ 2n_2 & -1 & 0 & -(1-2m_3) & 0 & 0 \\
1 & -2n_1 & 0 & 0 &  -(1-2m_3) & 0  \\ -2n_1 & 1 & 0 & 0 & 0 & -(1- 2m_3)  \\
0 & 0 & 1 & -2n_1 & 1 & -2n_2  \\ 0 & 0 & -2n_1 & 1 & -2n_2 & 1  
\end{bmatrix} \; $$
Here the $\gcd$ of the list of $2 \times 2$ minors is seen to be 1. Furthermore, each $4 \times 4$ minors is a product of at least 2 factors from the list of $2 \times 2$ minors. It follows that the $\gcd$ of the list of $4 \times 4$ minors is $g^2$ and that $\snf(\partial_2) = \diag(1,1,g,g,0,0)$.

For Case (3), we consider 
$$\partial_1 = \begin{bmatrix} 1 & -2n_1 & 1 & -2n_2 & 1 & -2n_3  \\ -2n_1 & 1 & -2n_2 & 1 & - 2n_3 & 1 \end{bmatrix} \; .$$
The list of the $2 \times 2$ minors is
\begin{equation} \begin{gathered} 1 - 4n_1^2, ~1 - 4n_2^2, ~ 1 - 4n_3^3, \\
~1 -4n_1 n_2, ~1 - 4n_1 n_3, ~1 - 4n_2 n_3 \\
~2(n_1 -n_2)  , ~2(n_1 - n_3), ~2 (n_2 - n_3) \; .
\end{gathered} \end{equation} 
Using Lemma~\ref{number3}, we can eliminate the third row of this set of formulae, giving the desired result.
This proves the result for $\partial_1$ and $\partial_3$. The calculation for $\partial_2$ is now similar to that in Case (2).

\end{proof}


\section{The Rank-3 case, with a non-trivial involution} \label{3-NT}

Now we consider the same rank-3 graph $\Lambda$ with a non-trivial involution $\gamma$. The involution $\gamma$ swaps the two vertices and this extends consistently to an involution on all higher-degree edges. The next theorem shows the $K$-theory of the real $C \sp *$-algebra $C \sp * \sr(\Lambda, \gamma)$. 

Again, we have three cases to consider, depending on the structure of the adjacency matrices.

\begin{enumerate}
\item If $M_1 = T_1, M_2 = D_2, M_3 = D_3$ then
\begin{align*}
g &= \gcd(1 - 4n_1^2, 1 - 2m_2, 1-2m_3) \\
h &= \gcd(1 - 2 n_1, 1 - 2 m_2,  1 - 2 m_3 ) \\
k &= \gcd(1 + 2 n_1, 1 - 2 m_2,  1 - 2 m_3 ) \; .
\end{align*}
\item If $M_1 = T_1, M_2 = T_2, M_3 = D_3$ then
\begin{align*}
g &= \gcd(1 - 4n_1^2, 1-4n_2^2, 1-4n_1 n_2, 1-2m_3)   \\
h &= \gcd(1 - 2 n_1, 1 - 2 n_2,  1 - 2 m_3 ) \\
k &= \gcd(1 + 2 n_1, 1 + 2n_2,  1 - 2 m_3 ) \; .
\end{align*}
\item If $M_1 = T_1, M_2 = T_2, M_3 = T_3$ then
\begin{align*}
g &= \gcd(1 - 4n_1^2, 1-4n_2^2, 1 - 4n_3^2,1-4n_1 n_2, 1-4n_1 n_3, 1-4n_2 n_3)   \\
h &= \gcd(1 - 2 n_1, 1 - 2 n_2,  1 - 2 n_3 ) \\
k &= \gcd(1 + 2 n_1, 1 + 2 n_2,  1 + 2 n_3 ) \; .
\end{align*}
\end{enumerate}
Note that $g = hk$ in each case, because $1 - 2n_i$ and $1 + 2n_i$ are relatively prime, by Lemmas~\ref{number2} and \ref{number4}.

\begin{prop} \label{rank-3-involution}
Let $\Lambda$ be a rank-3 graph as described above, with non-trivial involution $\gamma$. Then
$K\crr(C \sp * \sr (\Lambda, \gamma)$ is given by the table below.
\end{prop}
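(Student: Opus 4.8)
The plan is to mirror the proof of Proposition~\ref{calc1}, but now using the $\mathcal{CR}$-module spectral sequence of \cite[Theorem 3.13]{BG} associated to the pair $(\Lambda, \gamma)$. Because $\gamma$ interchanges the two vertices, the vertex subalgebra of $C \sp * \sr(\Lambda, \gamma)$, equipped with its induced real structure, is $\C$ viewed as a real $C \sp *$-algebra; accordingly one takes $\mathcal{A} = K\crr(\C)$, and the $E^2$ page is the homology of the chain complex
\[ 0 \rightarrow \mathcal{A} \xrightarrow{\partial_3} \mathcal{A}^3 \xrightarrow{\partial_2} \mathcal{A}^3 \xrightarrow{\partial_1} \mathcal{A} \rightarrow 0 , \]
where the $\partial_i$ are built from exactly the same array of blocks $I - M_i^T$ as in Proposition~\ref{calc1}, now read as $\mathcal{CR}$-module endomorphisms of $K\crr(\C)$. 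This is legitimate because each $M_i \in \{D_i, T_i\}$ commutes with the vertex-swap, so $I - M_i^T$ commutes with $\psi$ and lifts to such an endomorphism: multiplication by the integer $1 - 2m_i$ when $M_i = D_i$, and the endomorphism that on the complex part $KU_*(\C) = \Z^2$ acts by $\sm{1}{-2n_i}{-2n_i}{1}$ when $M_i = T_i$.

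The complex part is settled quickly. The complex part of $\mathcal{A}$ is $KU_*(\C) = \Z^2$ in every even degree, and in each even degree the complex part of this chain complex is, up to Bott periodicity, the complex (\ref{chaincomplex1}); hence by Lemma~\ref{snf1} its homology is $(\Z_g, \Z_g^2, \Z_g, 0)$ in even degrees and $0$ in odd degrees. Moreover $KU_n(C \sp * \sr(\Lambda, \gamma)) = K_n(C \sp *(\Lambda)) = \Z_g^2$ for all $n$ by Proposition~\ref{calc1}, since $C \sp *(\Lambda)$ is the complexification of $C \sp * \sr(\Lambda, \gamma)$.

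The numbers $h$ and $k$ enter through the real part. The real part of $\mathcal{A}$ is $KO_*(\C) = \Z$ in every even degree and $0$ in odd degrees, so in each even degree $n$ the real part of the chain complex is the Koszul-type complex $0 \to \Z \to \Z^3 \to \Z^3 \to \Z \to 0$ whose entries are the three scalars by which the $I - M_i^T$ act on $KO_n(\C) = \Z$. For $M_i = D_i$ that scalar is $1 - 2m_i$ in every degree; for $M_i = T_i$ it is $1 - 2n_i$ when $n \equiv 0 \pmod 4$ and $1 + 2n_i$ when $n \equiv 2 \pmod 4$, because $c_n \colon KO_n(\C) = \Z \to KU_n(\C) = \Z^2$ equals $\smv{1}{1}$ for $n \equiv 0 \pmod 4$ and $\smv{1}{-1}$ for $n \equiv 2 \pmod 4$ (Figure~7), while $\sm{1}{-2n_i}{-2n_i}{1}$ sends $\smv{1}{1}$ to $(1 - 2n_i)\smv{1}{1}$ and $\smv{1}{-1}$ to $(1 + 2n_i)\smv{1}{-1}$. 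Comparing with the definitions preceding the proposition, this real complex is thus the Koszul complex of the triple whose gcd is $h$ when $n \equiv 0 \pmod 4$ and of the triple whose gcd is $k$ when $n \equiv 2 \pmod 4$. A Smith-normal-form computation in the spirit of Lemma~\ref{snf1} — the relevant $1 \times 3$, $3 \times 3$ and $3 \times 1$ matrices have minor-gcds $d$, $d^2$, $d$ — shows that the Koszul complex of any triple of nonzero integers with gcd $d$ has homology $(\Z_d, \Z_d^2, \Z_d, 0)$. Hence the real part of $H_*(\mathcal{C})$ is $(\Z_h, \Z_h^2, \Z_h, 0)$ in degrees $\equiv 0 \pmod 4$, $(\Z_k, \Z_k^2, \Z_k, 0)$ in degrees $\equiv 2 \pmod 4$, and $0$ in odd degrees.

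Finally one runs the spectral sequence. On the $E^2$ page nonzero groups occur only in columns $p \in \{0, 1, 2\}$ (the $p = 3$ column vanishes, as $H_3 = 0$) and in rows $q$ even, so $d_2$ maps into a zero row (its target has odd second index), and for $r \geq 3$ the differential $d_r \colon E^r_{p,q} \to E^r_{p-r, q+r-1}$ has no room since $p \leq 2 < r$ at the source; hence $E^2 = E^\infty$. Along the diagonal $p + q = n$ at most two subquotients survive: $E^\infty_{0,n}$ and $E^\infty_{2,n-2}$ when $n$ is even, and only $E^\infty_{1,n-1}$ when $n$ is odd. For $n$ odd there is no extension problem, giving $KO_n \cong \Z_h^2$ for $n \equiv 1 \pmod 4$ and $KO_n \cong \Z_k^2$ for $n \equiv 3 \pmod 4$. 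For $n$ even, $KO_n$ is an extension whose two subquotients are $\Z_h$ and $\Z_k$; since $\gcd(h, k) = 1$ (as noted before the proposition) the extension splits, so $KO_n \cong \Z_h \oplus \Z_k \cong \Z_{hk} = \Z_g$. Together with $KU_n \cong \Z_g^2$ this yields the table. The step I expect to be the main obstacle is the degree-by-degree bookkeeping of the real part — recognizing that a $T_i$-block contributes $1 - 2n_i$ in degrees $\equiv 0 \pmod 4$ but $1 + 2n_i$ in degrees $\equiv 2 \pmod 4$, which is exactly what makes both $h$ and $k$ (rather than only $g$) appear in $KO_*$; the Koszul homology identification and the extension arguments are routine by comparison.
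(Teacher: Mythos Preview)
Your proof is correct and follows essentially the same route as the paper's: set $\mathcal{A} = K\crr(\C)$, compute the real part of the chain complex degree by degree to get homology $(\Z_h, \Z_h^2, \Z_h, 0)$ in degrees $\equiv 0 \pmod 4$ and $(\Z_k, \Z_k^2, \Z_k, 0)$ in degrees $\equiv 2 \pmod 4$, observe that all differentials on $E^2$ vanish for lack of room, and split the even-degree extensions using $\gcd(h,k)=1$. The only real difference is how you identify the scalar by which $I - M_i^T$ acts on $KO_n(\C)$: the paper simply invokes Table~3 of \cite{BG} (``add the entries of the first row'' in degree $0$, ``subtract'' in degree $2$), whereas you recover the same scalars by noting that $c_n$ embeds $KO_n(\C)$ as the eigenvector $\smv{1}{\pm 1}$ of $\sm{1}{-2n_i}{-2n_i}{1}$ with eigenvalue $1 \mp 2n_i$. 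That is a nice self-contained derivation of the rule and arguably more transparent than the citation, but it is not a genuinely different argument.
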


$$K\crr(C \sp *  \sr (\Lambda, \gamma))$$
$$\begin{array}{|c|c|c|c|c|c|c|c|c|}  
\hline \hline  
n & \makebox[1cm][c]{0} & \makebox[1cm][c]{1} & 
\makebox[1cm][c]{2} & \makebox[1cm][c]{3} 
& \makebox[1cm][c]{4} & \makebox[1cm][c]{5} 
& \makebox[1cm][c]{6} & \makebox[1cm][c]{7}  \\
\hline  \hline
KO_n(C \sp *  \sr (\Lambda, \gamma))
& \Z_h \oplus \Z_k &  \Z_h^2 & \Z_h \oplus \Z_k    & \Z_k^2
& \Z_h \oplus \Z_k &  \Z_h^2 & \Z_h \oplus \Z_k    & \Z_k^2 \\
\hline  
KU_n(C \sp *  \sr (\Lambda, \gamma))
& \Z_g^2 & \Z_g^2 & \Z_g^2 & \Z_g^2 & \Z_g^2 & \Z_g^2 & \Z_g^2 & \Z_g^2    \\
\hline \hline
\end{array}$$

\begin{remark}
Note that $\Z_g \cong \Z_h \oplus \Z_k$. 
So, similar to the previous examples, there is a direct sum decomposition. Here it can be written as
\begin{align*}
K\crr( C \sp * \sr (\Lambda, \gamma)) \cong &
 \left( K\crr(  \mathcal{O}_{h+1} \pr)  \oplus  \left( \Sigma^{-1} K\crr(\mathcal{O}_{h+1} \pr) \right)^2 \oplus \Sigma^{-2} K\crr(\mathcal{O}_{h+1} \pr) \right)
 \\
 & \oplus  \left( \Sigma^{-4} K\crr(  \mathcal{O}_{k+1} \pr)  \oplus \left( \Sigma^{-5} K\crr(\mathcal{O}_{k+1} \pr) \right)^2 \oplus \Sigma^{-6} K\crr(\mathcal{O}_{k+1} \pr) \right)
\; .
\end{align*} 
\end{remark} 

\begin{remark}
Let $A(n_1, m_2, m_3) = C \sp * \sr(\Lambda, \gamma)$ be the real $C \sp *$-algebra obtained from a particular choice of integers $n_1, m_2, m_3$ in Case (1). Let $\widetilde{g} = \gcd(m_2, m_3)$. If $n_1$ and $n'_1$ are two positive integers satifying $n_1 \equiv n_1' \pmod {\widetilde{g}}$
then we have
\begin{align*}
\gcd( 1 - 4n_1^2, 1 -2m_2, 1-2m_3) &= \gcd( 1 - 4(n_1')^2, 1 -2m_2, 1-2m_3) \\
\gcd( 1 - 2n_1, 1 -2m_2, 1-2m_3) &= \gcd( 1 - 2n_1', 1 -2m_2, 1-2m_3) \\
\gcd( 1 + n_1, 1 -2m_2, 1-2m_3) &= \gcd( 1 + 2n_1', 1 -2m_2, 1-2m_3)  \; . 
\end{align*}
Then it follows by Proposition~\ref{rank-3-involution} that 
$K\crr(A(n_1, m_2, m_3)) \cong K\crr( A(n_1', m_2, m_3))$ and therefore using \cite{BRS} that
$A(n_1, m_2, m_3)) \cong A(n_1', m_2, m_3))$.

Suppose on the other hand, we replace $n_1$ by $n'_1$ where $n_1 \equiv -n_1' \pmod {\widetilde{g}}$.
Then we have
\begin{align*}
\gcd( 1 - 4n_1^2, 1 -2m_2, 1-2m_3) &= \gcd( 1 - 4(n_1')^2, 1 -2m_2, 1-2m_3) \\
\gcd( 1 - 2n_1, 1 -2m_2, 1-2m_3) &= \gcd( 1 + 2n_1', 1 -2m_2, 1-2m_3) \\
\gcd( 1 + n_1, 1 -2m_2, 1-2m_3) &= \gcd( 1 - 2n_1', 1 -2m_2, 1-2m_3)  \; . 
\end{align*}
Thus the roles of the $h$ and $k$ change places and it follows by Proposition~\ref{rank-3-involution} that 
$K\crr(A(n_1, m_2, m_3)) \cong \Sigma^2 K\crr( A(n_1', m_2, m_3))$.
The real $C \sp *$-algebras $A(n_1, m_2, m_3)$ and $A(n_1', m_2, m_3)$ are not isomorphic in this case, though their respective complexifications are, since $KU_*(A(n_1, m_2, m_3)) \cong KU_*( A(n_1', m_2, m_3))$.
\end{remark} 

\begin{proof}[Proof of Proposition~\ref{rank-3-involution}]
The complex part $KU_*( C \sp * \sr (\Lambda, \gamma))$ is the same as what we obtained for $KU( C \sp * \sr (\Lambda))$, since both are isomorphic to the $K$-theory of the complex graph algebra associated to $\Lambda$, that is to $K_*( C \sp * (\Lambda))$. But to find $KO_*(C \sp * \sr (\Lambda, \gamma))$ we go back to the spectral sequence again.

By \cite{BG} there is again a chain complex
\[ 0 \rightarrow \mathcal{A} \xrightarrow{\partial_3} \mathcal{A}^3 \xrightarrow{\partial_2} 
				\mathcal{A}^3 \xrightarrow{\partial_1} \mathcal{A} \rightarrow 0 \]
the homology of which gives the $E^2$ page of a spectral sequence which converges to $K\crr( C \sp * (\Lambda))$, but this time we have
$\mathcal{A} = K\crr(\C)$.

The real part of this chain complex in even degrees is
\[ 0 \rightarrow \Z \xrightarrow{\partial_3} \Z^3 \xrightarrow{\partial_2}
				\Z^3 \xrightarrow{\partial_1} \Z \rightarrow 0 \]
and the real part vanishes in the odd degrees.
In degree 0 we have 
\begin{align*}
\partial_1 &= \begin{bmatrix} I - MO_1^T & I - MO_2^T & I - MO_3^T \end{bmatrix} \\
\partial_2 &= \begin{bmatrix} -(I - MO_2^T) & -(I - MO_3^T) & 0  \\
					I - MO_1^T & 0 & -(I - MO_3^T) \\
					0 & I - MO_1^T & I - MO_2^T &  \end{bmatrix} \\
\partial_3 &=\begin{bmatrix} I - MO_1^T \\ -(I - MO_2^T) \\ I - MO_3^T \end{bmatrix} 
\end{align*}
and the $1 \times 1$ matrices for $I - MO_i^T$ are found from $I - M_i^T$ using the instructions from Table~3 in Section~3D of \cite{BG}.

Now, we consider Case (1) specifically, so that we have
\[ I -   M_i = 
\begin{cases} 
\begin{bmatrix} 1 & -2 n_i \\ -2n_i & 1\end{bmatrix}  &i = 1 \\ ~ \\
\begin{bmatrix} 1- 2m_i & 0 \\ 0 & 1 - 2 m_i  \end{bmatrix}    & i = 2,3.                             
\end{cases} \]
We then find that
\[ I - MO_i = 
\begin{cases} 1 - 2n_i & i = 1 \\
 1 - 2m_i & i = 2,3
\end{cases} \]
(the rule here is that we add the entries in the first row of each $2 \times 2$ matrix to get a new $1 \times 1$ matrix).
So we have
\[ 
 \partial_1 = \partial_3^T =  \begin{bmatrix} 1 - 2n_1 & 1-2m_2 & 1 - 2m_3\end{bmatrix} 
\quad \text{and} \quad 
 \partial_2 = 
 	\begin{bmatrix} -(1 - 2m_2) & -(1 - 2m_3)  & 0 \\ 1 - 2n_1 & 0 & -(1 - 2m_3) \\ 0 &  1-2n_1 & 1 - 2m_2 \end{bmatrix} 
\]
We claim that
\[ 
S( \partial_1) = S( \partial_3^T) =  \begin{bmatrix} h & 0 & 0 \end{bmatrix} 
\quad \text{and} \quad 
S( \partial_2) = \begin{bmatrix} h & 0 & 0 \\ 0 & h & 0 \\ 0 & 0 & 0 \end{bmatrix} 
\]
where $h = \gcd(1 - 2n_1, 1 - 2m_2, 1 - 2m_3)$.
The statements about $S( \partial_1) $ and $S( \partial_3)$ are clear, but for $S(\partial_2)$ first note that $\partial_2$ has rank 2. The $\gcd$ of all the entries of $\partial_2$ is $h$; while the $\gcd$ of all the $2 \times 2$ minors 
\[ (1 - 2n_1)(1 - 2m_2), (1 - 2n_1)(1 - 2m_3), (1 - 2m_2)(1-m_3),  (1 - 2n_1)^2, (1 - 2m_2)^2, (1 - 2m_3)^2  \;  \]
which is $h^2$. From this the statement about $\snf(\partial_2)$ follows.

The result is that the homology of the chain complex in degree 0 is
$H_*(\mathcal{C}) = ( \Z_h, \Z_h^2, \Z_h, 0)$ in degrees $p = 0,1,2,3$ and this gives us the 0th row of the $E^2$ page of the real part of the spectral sequence. 

For row 2 of the spectral sequence, Table~3 of \cite{BG} dictates that we subtract instead of add the adjacent entries of $I - M_i$ so we have
\[ 
 \partial_1^T =  \begin{bmatrix} 1 + 2n_1 & 1-2m_2 & 1 - 2m_3\end{bmatrix} 
\quad \text{and} \quad 
 \partial_2 = 
 	\begin{bmatrix} -(1 - 2m_2) & 1 - 2m_3  & 0 \\ 1 + 2n_1 & 0 & -(1 - 2m_3) \\ 0 &  -(1+2n_1) & 1 - 2m_2 \end{bmatrix} 
\]
Thus 
\[ 
S( \partial_1) = S( \partial_3^T) =  \begin{bmatrix} k & 0 & 0 \end{bmatrix} 
\quad \text{and} \quad 
S( \partial_2) = \begin{bmatrix} k & 0 & 0 \\ 0 & k & 0 \\ 0 & 0 & 0 \end{bmatrix} 
\]
where $k = \gcd(1 + 2n_1, 1-2m_2, 1 - 2m_3)$.
The homology of the chain complex is 
$H_*(\mathcal{C}) = ( \Z_k, \Z_k^2, \Z_k, 0)$ in degrees $p = 0,1,2,3$ and this gives us row 2 of the $E^2$ page of the real part of the spectral sequence. 

Rows 4 and 6 are the same as rows 0 and 2, respectively. So the $E^2$ page of the spectral sequence is the following. For both the real and complex parts, we have $E^2 = E^\infty$, because no non-zero differentials are possible.
	\begin{gather*}
			\text{ \underline{ $E^2_{p,q} $ (for $g$ odd)  }}\\
			\def\vvline{\hfil\kern\arraycolsep\vline\kern-\arraycolsep\hfilneg}
			\begin{array}{ ccccc }
				\multicolumn
				{5}{c}{ \underline{ \text{real part}}} \\
				\vspace{.25cm} \\
				\vdots \vvline & \hspace{.3cm} \vdots \hspace{.3cm} & \hspace{.3cm} \vdots \hspace{.3cm} 
						& \hspace{.3cm} \vdots \hspace{.3cm} & \hspace{.1cm} \vdots \hspace{.1cm} \\
				7  \vvline & 0 & 0  & 0 & 0  \\
				6  \vvline  &  \Z_{k} &  \Z_k^2 & \Z_k  & 0 \\  
				5   \vvline & 0 & 0  & 0 & 0 \\
				4   \vvline  &  \Z_{h} &  \Z_h^2 & \Z_h & 0  \\
				3   \vvline & 0 & 0  & 0  & 0 \\
				2  \vvline  &  \Z_{k} &  \Z_k^2 & \Z_k  & 0 \\  
				1  \vvline & 0 & 0  & 0 & 0 \\ 
				0  \vvline  &  \Z_{h} &  \Z_h^2 & \Z_h  & 0 \\  \hline
				~ \vvline & 0 & 1 & 2 & 3
			\end{array}
			\hspace{3cm}
			\begin{array}{ ccccc }
				\multicolumn
				{5}{c}{ \underline{ \text{complex part}}} \\
				\vspace{.25cm} \\
				\vdots \vvline & \hspace{.3cm} \vdots \hspace{.3cm} & \hspace{.3cm} \vdots \hspace{.3cm} 
						& \hspace{.3cm} \vdots \hspace{.3cm} & \hspace{.1cm} \vdots \hspace{.1cm} \\
				7 \vvline  & 0    &  0 &  0  & 0 \\
				6    \vvline  &  \Z_{g} &  \Z_g^2 & \Z_g & 0  \\
				5  \vvline  & 0    &  0 &  0 & 0   \\
				4   \vvline  &  \Z_{g} &  \Z_g^2 & \Z_g & 0 \\
				3   \vvline  & 0    &  0 &  0  & 0  \\
				2    \vvline  &   \Z_{g} &  \Z_g^2 & \Z_g & 0\\
				1   \vvline  & 0    &  0 &  0  & 0  \\
				0   \vvline  &  \Z_{g} &  \Z_g^2 & \Z_g & 0 \\ \hline
				~ \vvline & 0 & 1 & 2 & 3
			\end{array}
		\end{gather*}
From the spectral sequence we immediately find the isomorphism class of
$KO_j( C \sp * \sr (\Lambda, \gamma))$ when $j$ is odd.
Now, for $j$ even there is a short exact sequence
$$0 \rightarrow \Z_h \rightarrow KO_0( C \sp * \sr (\Lambda, \gamma)) \rightarrow \Z_k \rightarrow 0 \; ,$$
or
$$0 \rightarrow \Z_k \rightarrow KO_0( C \sp * \sr (\Lambda, \gamma)) \rightarrow \Z_h \rightarrow 0 \; ,$$
depending on the parity of $j/2$. But since $h, k$ are relatively prime we must have 
$KO_0( C \sp * \sr (\Lambda, \gamma)) \cong \Z_h \oplus \Z_k \cong \Z_g$ in both cases.

The proofs in Cases (2) and (3) proceed similarly.
\end{proof}


\section{Rank-4 graph with 2 vertices} \label{4}

Now let $\Lambda$ be the rank-4 graph with 2 vertices discussed in Section 6 of \cite{MRV}. 
In Proposition~6.4 of \cite{MRV} some partial results are described for $K_*( C \sp * (\Lambda))$.
We again find that using the real and complex $K$-theory together, we can complete these computations.
In this section we present the $K$-theory of both real $C \sp *$-algebras $C \sp * \sr(\Lambda)$ and $C \sp * \sr(\Lambda, \gamma)$
where $\gamma$ is the non-trivial involution.
We also show the additional complications for $k > 4$ which prevent us from making further progress.

The adjacency matrices $M_i$ for $\Lambda$ are all of the form $T_i$ or $D_i$, as before.
We define $g$ as follows, according to the four possible cases. We also define $h$ and $k$ for reference when describing $KO_*(C \sp *  \sr(\Lambda, \gamma))$.

\begin{enumerate}
\item If $M_1 = T_1, M_2 = D_2, M_3 = D_3, M_4 = D_4$ then
\begin{align*}
	g &= \gcd\{1 - 4n_1^2, 1 - 2m_k \mid  k \in \{2, 3,4 \} \}  \\ 
	h &= \gcd\{1 - 2n_1, 1 - 2m_k \mid  k \in \{2, 3,4 \} \}  \\ 
	k &= \gcd\{1 + 2n_1, 1 - 2m_k \mid  k \in \{2, 3,4 \} \}  
\end{align*}
\item If $M_1 = T_1, M_2 = T_2, M_3 = D_3, M_4 = D_4$ then
\begin{align*}
	g &= \gcd\{1 - 4n_i^2,1-4n_i n_j, 1 -2m_k \mid i,j \in \{1,2\} , k \in \{3,4 \} \}  \\
	h &= \gcd\{1 - 2n_i , 1 -2m_k  \mid i \in \{1,2\} , k \in \{3,4 \} \}  \\
	k &= \gcd\{1 + 2n_i , 1 -2m_k \mid i \in \{1,2\}. , k \in \{3,4 \} \}  
	\; . 
\end{align*}
\item If $M_1 = T_1, M_2 = T_2, M_3 = T_3,  M_4 = D_4$ then
\begin{align*}
	g &= \gcd\{1 - 4n_i^2,1-4n_i n_j, 1 -2m_4 \mid i,j \in \{1,2,3\} \}  \\
	h &= \gcd\{1 - 2n_i , 1 -2m_4  \mid i \in \{1,2,3\}  \}  \\
	k &= \gcd\{1 + 2n_i , 1 -2m_4 \mid i \in \{1,2,3\} \}    		
	\; .
\end{align*}
\item If $M_1 = T_1, M_2 = T_2, M_3 = T_3,  M_4 = T_4$ then
\begin{align*}
	g &= \gcd\{1 - 4n_i^2,1-4n_i n_j  \mid i,j \in \{1,2,3,4\} \}  \\
	h &= \gcd\{1 - 2n_i  \mid i \in \{1,2,3,4\} \}  \\
	k &= \gcd\{1 + 2n_i  \mid i \in \{1,2,3,4\} \}    		\; .
\end{align*}
\end{enumerate}

Then using the methods of the two previous sections, we obtain the following propositions.

\begin{prop}
For a rank-4 graph as described above, with non-trivial involution $\gamma$ we have $K\crr( C \sp *( \Lambda))$ and $K\crr(C \sp * \sr (\Lambda, \gamma)$ given by the table below, for $g \geq 3$.
If $g = 1$, then $K\crr(C \sp * \sr(\Lambda)) = K\crr(C \sp * \sr (\Lambda, \gamma) = 0$.
\end{prop}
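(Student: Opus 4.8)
The plan is to adapt the spectral-sequence calculation carried out in Sections~\ref{3-T} and~\ref{3-NT}, now using the rank-4 Koszul-type chain complex
\[ 0 \rightarrow \mathcal{A} \xrightarrow{\partial_4} \mathcal{A}^4 \xrightarrow{\partial_3} \mathcal{A}^6 \xrightarrow{\partial_2} \mathcal{A}^4 \xrightarrow{\partial_1} \mathcal{A} \rightarrow 0 \]
whose homology furnishes the $E^2$ page of the Evans/\cite{BG} spectral sequence converging to $K\crr(C\sp*\sr(\Lambda))$ (for $\mathcal{A}=K\crr(\R)\oplus K\crr(\R)$) and to $K\crr(C\sp*\sr(\Lambda,\gamma))$ (for $\mathcal{A}=K\crr(\C)$). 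First I would record the boundary maps $\partial_j$ explicitly: these are the usual signed inclusion/projection maps of a Koszul complex on the four commuting operators $I-M_i^T$, exactly as in \cite{MRV} Section~6. For the complex part in degree $0$ (and, in the trivial-involution case, the real part in degrees $0$ and $4$) these are $2\times 2$-block integer matrices; for the real part in the non-trivial-involution case the blocks collapse to the $1\times 1$ matrices $I-MO_i^T$ via the row-sum rule (for even rows $\equiv 0\bmod 4$) or the row-difference rule (for even rows $\equiv 2\bmod 4$) from Table~3 of \cite{BG}, and the real part vanishes in odd degrees.

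Next I would compute the relevant Smith normal forms, the rank-4 analogue of Lemma~\ref{snf1}. The key input is that the $\gcd$ of the $2\times 2$ minors of $\partial_1$ equals $g$ in all four cases; this requires exactly the number-theoretic lemmas of Section~\ref{NTL} (the rank-4 analogues of Lemmas~\ref{number1}, \ref{number3}) to discard the ``$2(n_i-n_j)$'' and mixed minors, thereby correcting/extending the $\gcd$ formula for $g$ just as in the rank-3 case. Then one checks that $\partial_1,\partial_4^T$ have SNF $\diag(1,g,0,\dots)$-type, $\partial_2$ has SNF $\diag(1,1,1,g,g,g,0,0)$ and $\partial_3$ the transpose-compatible form, so that the homology of the complex chain complex in degree $0$ is $(\Z_g,\Z_g^3,\Z_g^3,\Z_g,0)$ in Koszul-degrees $p=0,1,2,3,4$; for the real non-trivial part the same computation with $1-2n_i\leadsto h$ resp.\ $1+2n_i\leadsto k$ yields $(\Z_h,\Z_h^3,\Z_h^3,\Z_h,0)$ and $(\Z_k,\dots)$ on the appropriate rows. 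Since $g$ (hence $h,k$) is odd, the degree-$1,2$ real parts (which are the same matrices mod~$2$) are exact, so those rows vanish, exactly as before.

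From the $E^2$ page I would argue $E^2=E^\infty$: the differentials $d^r$ shift $(p,q)$ by $(-r,r-1)$, and in both the real and complex parts the only nonzero columns are $p=0,1,2,3$ while nonzero rows are spaced by $2$ (complex) or by $4$ (real non-trivial) or land in periodic blocks, so every potential differential has either source or target zero. For $g=1$ the whole $E^2$ page vanishes and hence $K\crr=0$ in all degrees, giving the stated ``if $g=1$'' clause. For $g\ge 3$, the trivial-involution real $K$-theory $KO_n(C\sp*\sr(\Lambda))$ reads off a single nonzero subfactor per diagonal and hence is determined directly; the complex $KU_n$ has a two-step filtration on each diagonal, and — as in Proposition~\ref{calc1} — I would resolve the extension by chasing the complexification map $c\colon KO\to KU$ through the spectral sequence, using that $c_0$ is an isomorphism $K\crr(\R)\to$ the first summand of $K\crr(\C)$ in degree~$0$, to split $0\to\Z_g\to KU_n\to\Z_g\to 0$ and conclude $KU_n\cong\Z_g^2$. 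For the non-trivial involution, the filtration on $KO_n(C\sp*\sr(\Lambda,\gamma))$ for even $n$ is $0\to\Z_h\to KO_n\to\Z_k\to 0$ (or with $h,k$ swapped), which splits because $\gcd(h,k)=1$, giving $KO_n\cong\Z_h\oplus\Z_k\cong\Z_g$. The main obstacle is purely bookkeeping: verifying the rank-4 Smith normal forms, i.e.\ confirming that the larger list of $2\times2$, $4\times4$, and $6\times6$ minors of $\partial_2,\partial_3$ still has $\gcd$'s equal to powers of $g$ (this is where one must be careful, and where the number-theoretic lemmas of the appendix do the real work), and keeping the sign conventions in the Koszul differentials consistent so that the chain complex is genuinely a complex — everything after that is the same spectral-sequence and extension argument as in the rank-3 case.
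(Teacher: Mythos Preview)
Your overall framework is right, but there is a genuine gap and a related bookkeeping error that together mean the argument as written does not go through.

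The gap is your claim that ``every potential differential has either source or target zero.'' This is true for the real part in the trivial-involution case (nonzero rows are $q\equiv 0\pmod 4$, so $d_3\colon E^3_{3,0}\to E^3_{0,2}$ lands in zero), but it is \emph{false} for the complex part: there the nonzero rows are all even $q$, so $d_3\colon (E^3_{3,0})\pu\to (E^3_{0,2})\pu$ runs from $\Z_g$ to $\Z_g$ with no positional reason to vanish. The paper handles this by using the complexification map $c$, which commutes with the differentials: since $c$ is an isomorphism on the bottom row and $(E^3_{0,2})\po=0$, the square
\[
\xymatrix{
(E^3_{3,0})\po \ar[r]^{d_3}\ar[d]^{c}_{\cong} & (E^3_{0,2})\po=0 \ar[d]^{c}\\
(E^3_{3,0})\pu \ar[r]^{d_3} & (E^3_{0,2})\pu
}
\]
forces the bottom $d_3$ to vanish. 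This is the new ingredient in rank~4 that was not needed in rank~3, and your proposal omits it. (Your row-spacing claim for the non-trivial-involution real part is also off: since $\mathcal{A}=K\crr(\C)$ has $KO_n(\C)\neq 0$ for all even $n$, the nonzero rows are spaced by~2, not~4. The conclusion $d_3=0$ still holds there, but for a different reason: $d_3$ maps $\Z_h$ to $\Z_k$ and $\gcd(h,k)=1$.)

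The bookkeeping error is in the filtration subquotients. In rank~4 the Koszul homology is $(\Z_g,\Z_g^3,\Z_g^3,\Z_g,0)$, so each diagonal $p+q=n$ in the complex part meets two nonzero groups of sizes $\Z_g$ and $\Z_g^3$ (not $\Z_g$ and $\Z_g$); the splitting argument via $c$ then yields $KU_n\cong\Z_g^4$, matching the table, not $\Z_g^2$. Likewise in the non-trivial-involution real part the extensions are of the form $0\to\Z_h\to KO_n\to\Z_k^3\to 0$ (or with exponents and $h,k$ rearranged depending on $n$), giving $\Z_h\oplus\Z_k^3$, $\Z_h^3\oplus\Z_k$, etc., rather than $\Z_h\oplus\Z_k$ throughout.
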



$$K\crr(C \sp * \sr(\Lambda))$$
$$\begin{array}{|c|c|c|c|c|c|c|c|c|}  
\hline \hline  
n & \makebox[1cm][c]{0} & \makebox[1cm][c]{1} & 
\makebox[1cm][c]{2} & \makebox[1cm][c]{3} 
& \makebox[1cm][c]{4} & \makebox[1cm][c]{5} 
& \makebox[1cm][c]{6} & \makebox[1cm][c]{7}  \\

\hline  \hline
KO_n(C \sp * \sr (\Lambda))
& \Z_g  & \Z_g^3   & \Z_g^3 & \Z_g 
& \Z_g  & \Z_g^3   & \Z_g^3 & \Z_g  \\
\hline  
KU_n(C \sp * \sr (\Lambda))
& \Z_g^4 & \Z_g^4 & \Z_g^4 & \Z_g^4 & \Z_g^4 & \Z_g^4 & \Z_g^4 & \Z_g^4    \\
\hline \hline
\end{array}$$


$$K\crr(C \sp * \sr(\Lambda, \gamma))$$
$$\begin{array}{|c|c|c|c|c|c|c|c|c|}  
\hline \hline  
n & \makebox[1cm][c]{0} & \makebox[1cm][c]{1} & 
\makebox[1cm][c]{2} & \makebox[1cm][c]{3} 
& \makebox[1cm][c]{4} & \makebox[1cm][c]{5} 
& \makebox[1cm][c]{6} & \makebox[1cm][c]{7}  \\

\hline  \hline
KO_n(C \sp * \sr (\Lambda, \gamma)) 
& \Z_h \oplus \Z_k^3 & \Z_h^3 \oplus \Z_k & \Z_h^3 \oplus \Z_k & \Z_h \oplus \Z_k^3
& \Z_h \oplus \Z_k^3 & \Z_h^3 \oplus \Z_k& \Z_h^3 \oplus \Z_k & \Z_h \oplus \Z_k^3 \\
\hline  
KU_n(C \sp * \sr (\Lambda, \gamma))
& \Z_g^4 & \Z_g^4 & \Z_g^4 & \Z_g^4 & \Z_g^4 & \Z_g^4 & \Z_g^4 & \Z_g^4    \\
\hline \hline
\end{array}$$

\begin{proof}
We first consider the spectral sequence for $K\crr( C \sp *  \sr (\Lambda) )$.
The incidence matrices $M_i$ for $\Lambda$ are of the form $D_i$ and $T_i$ as above, for $i = 1,2,3,4$. The chain complex we consider is
\[ 0 \rightarrow \mathcal{A} \xrightarrow{\partial_4} \mathcal{A}^4 \xrightarrow{\partial_3} \mathcal{A}^6 
				\xrightarrow{\partial_2} \mathcal{A}^4
				\xrightarrow{\partial_1} \mathcal{A} \rightarrow 0 \]
where $\mathcal{A} = K\crr(\R) \oplus K\crr(\R)$. The analysis of Propositions 6.3 and 6.4 of \cite{MRV} obtains the following chain complex in the degree 0 complex part,
	\[ 0 \rightarrow \Z^2 \xrightarrow{\partial_4} \Z^8 \xrightarrow{\partial_3} \Z^{12}
				\xrightarrow{\partial_2}  \Z^8
				\xrightarrow{\partial_1} \Z^2 \rightarrow 0  ; . \]
Now, following the method of calculation of \cite{MRV} but using the corrections as in Section~\ref{3-T} we find the following
\begin{align*}
\snf(\partial_1) = \snf( \partial_3)^T &=  \begin{bmatrix} 1 & 0 & \zero \\ 0 & g & \zero  \end{bmatrix}
			\text{~in~} M_{2, 8} (\R)  \\
 \text{and} \quad
\snf(\partial_2) = \snf( \partial_4)^T &= 
	\begin{bmatrix} I_3 & 0_3 & \zero \\ 0_3 & g I_3 & \zero \\ \zero & \zero & \zero \end{bmatrix} 
	\text{~in~} M_{8, 12} (\R)
\end{align*}
Thus in the even degree complex part we have $H_*(\mathcal{C}) = (\Z_g, \Z_g^3, \Z_g^3, \Z_g, 0)$.
The real part of the chain complex in degrees 0 and 4 are the same. The real part in degrees 1 and 2 are the same modulo 2, so $H_*(\mathcal{C}) = 0$ in those degrees.
The $E^2$ page of the spectral sequence in the real and complex parts are then given by
	\begin{gather*}
			\text{ \underline{ $E^2_{p,q} $ (for $g$ odd)  }}\\
			\def\vvline{\hfil\kern\arraycolsep\vline\kern-\arraycolsep\hfilneg}
			\begin{array}{ cccccc }
				\multicolumn
				{5}{c}{ \underline{ \text{real part}}} \\
				\vspace{.25cm} \\
				\vdots \vvline & \hspace{.3cm} \vdots \hspace{.3cm} & \hspace{.3cm} \vdots \hspace{.3cm} 
						& \hspace{.3cm} \vdots \hspace{.3cm}  & \hspace{.3cm} \vdots \hspace{.3cm} & \hspace{.1cm} \vdots \hspace{.1cm} \\
				7  \vvline & 0 & 0  & 0 & 0  & 0   \\
				6  \vvline & 0 & 0  & 0 & 0 & 0 \\
				5   \vvline & 0 & 0  & 0 & 0 & 0 \\
				4   \vvline  &  \Z_{g} &  \Z_g^3 & \Z_g^3 & \Z_g & 0  \\
				3   \vvline & 0 & 0  & 0  & 0  & 0 \\
				2   \vvline & 0 & 0  & 0  & 0 & 0 \\
				1  \vvline & 0 & 0  & 0 & 0 & 0 \\
				0  \vvline  &  \Z_{g} &  \Z_g^3 & \Z_g^3 & \Z_g  & 0 \\  \hline
				~ \vvline & 0 & 1 & 2 & 3 & 4
			\end{array}
			\hspace{3cm}
			\begin{array}{ cccccc }
				\multicolumn
				{5}{c}{ \underline{ \text{complex part}}} \\
				\vspace{.25cm} \\
				\vdots \vvline & \hspace{.3cm} \vdots \hspace{.3cm} & \hspace{.3cm} \vdots \hspace{.3cm} 
						& \hspace{.3cm} \vdots \hspace{.3cm}  & \hspace{.3cm} \vdots \hspace{.3cm} & \hspace{.1cm} \vdots \hspace{.1cm} \\							7 \vvline  & 0    &  0 &  0  & 0 & 0  \\
				6    \vvline  &  \Z_{g} &  \Z_g^3 & \Z_g ^3 & \Z_g & 0  \\
				5  \vvline  & 0    &  0 &  0 & 0 & 0   \\
				4   \vvline  &  \Z_{g} &  \Z_g^3 & \Z_g ^3 & \Z_g & 0  \\
				3   \vvline  & 0    &  0 &  0  & 0 & 0  \\
				2    \vvline  &   \Z_{g} &  \Z_g^3 & \Z_g ^3 & \Z_g & 0  \\
				1   \vvline  & 0    &  0 &  0  & 0 & 0  \\
				0   \vvline  &  \Z_{g} &  \Z_g^3 & \Z_g ^3 & \Z_g & 0  \\ \hline
				~ \vvline & 0 & 1 & 2 & 3 & 4
			\end{array}
		\end{gather*}

We note that again, the complexification map $c$ is an isomorphism on $E^2_{p, 0}$, the bottom row of the spectral sequence.
In the real part of this spectral sequence all differentials must vanish but in the complex part, while $d_2 = 0$ there appears to be possible non-zero differentials $d_3$. We use the map c to show that $d_3 = 0$.
The complexification map $c$ is an isomorphism in degree 0 on $\mathcal{A}$ and passes to a map $c$ which is an isomorphism in on the first row of the $E^2 = E^3$ pages of the spectral sequence. Furthermore, $c$ commutes with $d_3$. In particular there is a commutative diagram

\[ \xymatrix{
(E_{3, 0}^3) \po \ar[r]^{d_3} \ar[d]^c
& (E_{0, 2}^3) \po \ar[d]^c 
& \text{or}  & \Z_g \ar[r]^{d_3} \ar[d]^c 
& 0 \ar[d]^c \\
(E_{3, 0} ^3) \pu \ar[r]^{d_3} 
& (E_{0, 2}^3) \pu 
& & \Z_g \ar[r]^{d_3} 
& \Z_g  } \]
Since $(E_{0,2}^3) \po = 0$ and $c_{3,0}^3$ is an isomorphism, the commutative diagram forces
$d_3 \colon (E_{3,0}^3)\pu \rightarrow (E_{0,2}^3)\pu$ to vanish. By periodicity $d_3$ vanishes on all 
$(E_{3,i}^3)\pu$. Thus $E^2 = E^3 = E^\infty$ on both the real and complex parts.

Now, in the real part there are no extension problems so the calculation of $KO_*( C \sp * \sr (\Lambda))$ is complete. But in the complex part, there are questions of extensions for both $KU_0( C \sp * \sr (\Lambda))$ and $KU_1( C \sp * \sr (\Lambda))$. In both cases, we use  the complexification map $c$ as in the rank 3 case to show that the extension is split. First we use the $p + q = 2$ diagonal of the spectral sequence to find a diagram involving $c \colon KO_2(C \sp * \sr (\Lambda)) \rightarrow KU_2(C \sp * \sr (\Lambda))$.
\[ \xymatrix{
0 \ar[r] & 0 \ar[r] \ar[d]^c
& KO_2(C \sp * \sr (\Lambda)) \ar[r] \ar[d]^c
& \Z_g^3  \ar[d]^c \ar[r]
& 0 \\
0 \ar[r] & \Z_g \ar[r] 
& KU_2(C \sp * \sr (\Lambda)) \ar[r] 
& \Z_g^3  \ar[r] 
& 0 \\
}
\]
The vertical map $c$ on the right is an isomorphism, coming from the first row of the spectral sequence. This shows that the extension on the bottom of the diagram has a splitting and thus that $KU_2( C \sp * \sr (\Lambda)) \cong \Z_g^2$. 

Using the $p + q = 3$ diagonal, we obtain the diagram
\[ \xymatrix{
0 \ar[r] & 0 \ar[r] \ar[d]^c
& KO_3(C \sp * \sr (\Lambda)) \ar[r] \ar[d]^c
& \Z_g  \ar[d]^c \ar[r]
& 0 \\
0 \ar[r] & \Z_g^3 \ar[r] 
& KU_3(C \sp * \sr (\Lambda)) \ar[r] 
& \Z_g  \ar[r] 
& 0 \\
}
\]
where again the vertical map $c$ is an isomorphism and again we find that $KU_3(C \sp * \sr (\Lambda)) = \Z_g^2$. 
Therefore $KU_i(C \sp * \sr (\Lambda)) = \Z_g^2$ for all $i$. This completes the calculation of $K\crr( C \sp * \sr(\Lambda))$.

For $K\crr( C \sp * \sr (\Lambda, \gamma))$,
the $E^2$ page of the spectral sequence in the real and complex parts are as follows.
	\begin{gather*}
			\text{ \underline{ $E^2_{p,q} $ (for $g$ odd)  }}\\
			\def\vvline{\hfil\kern\arraycolsep\vline\kern-\arraycolsep\hfilneg}
			\begin{array}{ cccccc }
				\multicolumn
				{5}{c}{ \underline{ \text{real part}}} \\
				\vspace{.25cm} \\
				\vdots \vvline & \hspace{.3cm} \vdots \hspace{.3cm} & \hspace{.3cm} \vdots \hspace{.3cm} 
						& \hspace{.3cm} \vdots \hspace{.3cm}  & \hspace{.3cm} \vdots \hspace{.3cm} & \hspace{.1cm} \vdots \hspace{.1cm} \\
				7  \vvline & 0 & 0  & 0 & 0  & 0   \\
				6   \vvline  &  \Z_{k} &  \Z_k^3 & \Z_k^3 & \Z_k  & 0 \\
				5   \vvline & 0 & 0  & 0 & 0 & 0 \\
				4   \vvline  &  \Z_{h} &  \Z_h^3 & \Z_h^3 & \Z_h  & 0  \\
				3   \vvline & 0 & 0  & 0  & 0  & 0 \\
				2   \vvline  &  \Z_{k} &  \Z_k^3 & \Z_k^3 & \Z_k  & 0 \\
				1  \vvline & 0 & 0  & 0 & 0 & 0 \\
				0  \vvline  &  \Z_{h} &  \Z_h^3 & \Z_h^3 & \Z_h  & 0 \\  \hline
				~ \vvline & 0 & 1 & 2 & 3 & 4
			\end{array}
			\hspace{3cm}
			\begin{array}{ cccccc }
				\multicolumn
				{5}{c}{ \underline{ \text{complex part}}} \\
				\vspace{.25cm} \\
				\vdots \vvline & \hspace{.3cm} \vdots \hspace{.3cm} & \hspace{.3cm} \vdots \hspace{.3cm} 
						& \hspace{.3cm} \vdots \hspace{.3cm}  & \hspace{.3cm} \vdots \hspace{.3cm} & \hspace{.1cm} \vdots \hspace{.1cm} \\
				7 \vvline  & 0    &  0 &  0  & 0 & 0  \\
				6    \vvline  &  \Z_{g} &  \Z_g^3 & \Z_g ^3 & \Z_g & 0  \\
				5  \vvline  & 0    &  0 &  0 & 0 & 0   \\
				4   \vvline  &  \Z_{g} &  \Z_g^3 & \Z_g ^3 & \Z_g & 0  \\
				3   \vvline  & 0    &  0 &  0  & 0 & 0  \\
				2    \vvline  &   \Z_{g} &  \Z_g^3 & \Z_g ^3 & \Z_g & 0  \\
				1   \vvline  & 0    &  0 &  0  & 0 & 0  \\
				0   \vvline  &  \Z_{g} &  \Z_g^3 & \Z_g ^3 & \Z_g & 0  \\ \hline
				~ \vvline & 0 & 1 & 2 & 3 & 4
			\end{array}
		\end{gather*}
The complex part of this is the same as what we analyzed in the first part of this proof.
For the real part, since $h$ and $k$ are relatively prime, we have $d_r = 0$ for all $r$, so $(E^2_{p,q})\po = (E^\infty_{p,q})\po$. Furthermore, along each diagonal $p + q = n$, the extensions determining $KO_n( C \sp * \sr (\Lambda, \gamma))$ must be direct sums, again because $h$ and $k$ are relatively prime.
\end{proof}

\begin{remark}
The $\CR$-modules $K\crr( C \sp * \sr (\Lambda))$ and $K\crr(C \sp * \sr(\Lambda, \gamma)$ can be seen to have decompositions  as a direct sum with eight summands, 
$$K\crr( C \sp * \sr (\Lambda)) \cong 
	K\crr(\mathcal{O}_{g+1} \pr) 
		\oplus (\Sigma^{-1} K\crr(\mathcal{O}_{g+1} \pr) )^3
			\oplus (\Sigma^{-2} K\crr(\mathcal{O}_{g+1} \pr) )^3
				\oplus \Sigma^{-3} K\crr(\mathcal{O}_{g+1} \pr) \; $$
and
				\begin{align*}
K\crr( C \sp * \sr (\Lambda, \gamma)) \cong &
 \left( K\crr(\mathcal{O}_{h+1} \pr) 
		\oplus (\Sigma^{-1} K\crr(\mathcal{O}_{h+1} \pr) )^3
			\oplus (\Sigma^{-2} K\crr(\mathcal{O}_{h+1} \pr) )^3
				\oplus \Sigma^{-3} K\crr(\mathcal{O}_{h+1} \pr)  \right)
 \\
 & \oplus  \left(  K\crr(\mathcal{O}_{k+1} \pr) 
		\oplus (\Sigma^{-1} K\crr(\mathcal{O}_{k+1} \pr) )^3
			\oplus (\Sigma^{-2} K\crr(\mathcal{O}_{k+1} \pr) )^3
				\oplus \Sigma^{-3} K\crr(\mathcal{O}_{k+1} \pr)  \right)
\; .
\end{align*} 
\end{remark}

\begin{remark}
We note that if $k \geq 5$, there will be an extra non-zero column to these spectral sequences. We can still analyze the spectral sequence and find that $(E^2_{p,q})\po = (E^\infty_{p,q})\po$ and $(E^2_{p,q})\pu = (E^\infty_{p,q})\pu$, using similar arguments as in the previous cases. However, there will be extension problems that we are unable to determine. For example,
$KO_0( C \sp * \sr (\Lambda))$ will be an extension of $\Z_g$ by $\Z_g$ with no clear way to determine the isomorphism class of extension group.

Furthermore, if $k \geq 6$, there is a more fundamental problem. There will be two extra non-zero columns of the spectral sequences used in these computations. As a result, there will be the possibility of a non-zero differential, say $d_5  \colon E^5_{5,0} \rightarrow E^5_{0,4}$, in both the real and complex parts. We have no clear way of determining the value of this differential. In general, we have no understanding of how the differential maps $d_r$ relate to the structure of the higher rank graph.
\end{remark}

\section{Appendix: Number Theory Lemmas} \label{NTL}

\begin{lemma} \label{number1}
Let $n_1$ and $n_2$ be positive integers, greater than 1. Then $g_1 = g_2 = g_3$ where
\begin{align*}
g_1 &= \gcd(1 - 4n_1^2, 1 -4 n_2^2, 1 -4n_1 n_2) \\
g_2	&= \gcd(1 - 4n_1^2, 1 -4 n_2^2, 2n_1-2n_2 ) \\
g_3 &= \gcd(1 - 4n_1^2, 1 -4 n_2^2, 1 -4n_1 n_2, 2n_1-2n_2 )
\end{align*}
\end{lemma}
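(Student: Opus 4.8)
The plan is to show $g_1 = g_3$ and $g_2 = g_3$ separately, which suffices since $g_3$ divides each of $g_1$ and $g_2$ by definition (it is a gcd over a superset of generators). In fact, once I show $g_3 \mid g_1$ forces $g_1 = g_3$ by demonstrating that $1 - 4n_1 n_2$ contributes nothing to the gcd beyond what $\{1 - 4n_1^2, 1 - 4n_2^2, 2n_1 - 2n_2\}$ already gives, and symmetrically that $2n_1 - 2n_2$ contributes nothing beyond $\{1 - 4n_1^2, 1 - 4n_2^2, 1 - 4n_1 n_2\}$, all three gcd's coincide.

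First I would observe the key algebraic identity
\[
(1 - 4n_1^2) - (1 - 4n_1 n_2) = 4n_1 n_2 - 4n_1^2 = -4n_1(n_1 - n_2) = -2n_1 \cdot 2(n_1 - n_2),
\]
and similarly $(1 - 4n_1 n_2) - (1 - 4n_2^2) = -4n_2(n_1 - n_2) = -2n_2 \cdot 2(n_1 - n_2)$. These show that $\gcd(1-4n_1^2, 1-4n_1n_2)$ and $\gcd(1-4n_2^2, 1-4n_1n_2)$ each divide a multiple of $2(n_1 - n_2)$; combined with the fact that $1 - 4n_1^2$ is odd (so $g_1$, $g_2$, $g_3$ are all odd), I can upgrade these divisibilities to conclude that $g_1$ divides $2(n_1 - n_2)$, hence $g_1 \mid 2n_1 - 2n_2$, hence $g_1 \mid g_3$ — giving $g_1 = g_3$. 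Concretely: since $g_1 \mid 1 - 4n_1^2$ and $g_1 \mid 1 - 4n_1 n_2$, we get $g_1 \mid -2n_1 \cdot 2(n_1 - n_2)$; as $g_1$ is odd and coprime to $n_1$ (any common factor of $g_1$ and $n_1$ would divide $1 - 4n_1^2 + 4n_1 \cdot n_1 = 1$), it follows that $g_1 \mid 2(n_1 - n_2)$.

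For $g_2 = g_3$, I would run the same identity in the other direction: from $g_2 \mid 1 - 4n_1^2$ and $g_2 \mid 2(n_1 - n_2)$, the identity $(1 - 4n_1^2) - (1 - 4n_1 n_2) = -2n_1 \cdot 2(n_1 - n_2)$ shows $g_2 \mid 1 - 4n_1 n_2$, hence $g_2 \mid g_3$, hence $g_2 = g_3$. The main obstacle — really the only subtlety — is the coprimality argument needed to cancel the factor $2n_1$ (or $2n_2$): one must check that $\gcd(g_i, 2) = 1$ (immediate, since $1 - 4n_1^2$ is odd) and that $\gcd(g_i, n_1) = 1$, which follows from the explicit combination $1 = (1 - 4n_1^2) + 4n_1 \cdot n_1$ exhibiting $1$ in the ideal generated by $g_i$ and $n_1$. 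Everything else is a short chain of divisibility implications, and no case analysis on the signs or sizes of $n_1, n_2$ is required beyond knowing they are positive integers greater than $1$.
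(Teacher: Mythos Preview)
Your proof is correct, but it takes a different route from the paper's. The paper argues prime-power by prime-power: for each odd prime power $p^k$ dividing $\gcd(1-4n_1^2,\,1-4n_2^2)$ it uses the factorization $1-4n_1^2=(1-2n_1)(1+2n_1)$ into coprime factors to split into the two cases $p^k\mid 1-2n_1$ or $p^k\mid 1+2n_1$, and then invokes the identities $(1\mp 2n_1)(1\pm 2n_2)=(1-4n_1n_2)\mp 2(n_1-n_2)$ to conclude that $p^k$ divides one of $1-4n_1n_2$ and $2(n_1-n_2)$ if and only if it divides the other. Your argument bypasses both the prime decomposition and the case split: you use the single identity $(1-4n_1^2)-(1-4n_1n_2)=-2n_1\cdot 2(n_1-n_2)$ together with the direct observation $\gcd(g_i,2n_1)=1$ (from $1=(1-4n_1^2)+4n_1\cdot n_1$) to pass between $1-4n_1n_2$ and $2(n_1-n_2)$. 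Your approach is a bit more streamlined; the paper's, via the factors $1\pm 2n_i$, has the side benefit of foreshadowing the quantities $h$ and $k$ that appear in the next lemma. One small clarification: the coprimality with $2n_1$ is only needed for the implication $g_1\mid 2(n_1-n_2)$; in the direction $g_2\mid 1-4n_1n_2$ the identity already expresses $1-4n_1n_2$ as an integer combination of $1-4n_1^2$ and $2(n_1-n_2)$, so no cancellation is required there.
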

\begin{proof}
Suppose that $p^k$ is a odd prime power such that $p^k | \gcd( 1 - 4n_1^2, 1 - 4n_2^2)$. Since $1 - 4n_1^2 = (1- 2n_1)(1 + 2n_1)$, and since $1 - 2n_1$ and $1 + 2n_1$ are relatively prime, we have either $p^k | (1-2n_1)$ or $p^k | (1 + 2n_1)$. If $p^k | (1 - 2n_1)$ then $p^k$ divides
$$(1 - 2n_1)(1 + 2n_2) = (1 - 4n_1 n_2) - 2(n_1 - n_2) \; .$$
It follows that if $p^k$ divides one of $1 - 4n_1 n_2$ and $2(n_1-n_2)$, then it divides both.
Similarly, if $p^k | (1 + 2n_1)$ we find that $p^k$ divides
$$(1 + 2n_1)(1 - 2n_2) = (1 - 4n_1 n_2) + 2(n_1 - n_2)$$
and the same conclusion is made. This proves the lemma.
\end{proof}

\begin{lemma} \label{number2}
Let $n_1$ and $n_2$ be positive integers, greater than 1. Then $\gcd(h,k) = 1$ and $g = hk$ where
\begin{align*}
g &= \gcd(1 - 4n_1^2, 1 -4 n_2^2, 1 -4n_1 n_2) \\
h&= \gcd(1 - 2n_1, 1 - 2 n_2) \\
k&= \gcd(1 + 2n_1,  1 + 2 n_2)  \; .
\end{align*}
\end{lemma}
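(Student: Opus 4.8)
The plan is to reduce both claims to a prime-power local analysis, exactly as in the proof of Lemma~\ref{number1}. Fix an odd prime $p$ (all quantities in sight are odd, so $p=2$ never arises) and a power $p^a$. First I would prove $\gcd(h,k)=1$: if $p$ divided both $h$ and $k$, then $p \mid (1-2n_1)$ and $p \mid (1+2n_1)$, hence $p \mid (1+2n_1)-(1-2n_1) = 4n_1$ and $p \mid (1-2n_1)+(1+2n_1) = 2$, which is impossible for $p$ odd. So $h$ and $k$ are coprime, and consequently $hk = \mathrm{lcm}(h,k)$, which will matter below.

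For the identity $g = hk$, I would argue that $g$ and $hk$ have the same $p$-adic valuation for every odd prime $p$. Write $v = v_p$ for the $p$-adic valuation. Since $1-4n_1^2 = (1-2n_1)(1+2n_1)$ and the two factors are coprime, $v(1-4n_1^2) = v(1-2n_1) + v(1+2n_1)$, and at most one of these summands is nonzero; similarly for $1-4n_2^2$. Set $\alpha_i = v(1-2n_i)$ and $\beta_i = v(1+2n_i)$, so $\min(\alpha_i,\beta_i)=0$ for $i=1,2$. Then $v(h) = \min(\alpha_1,\alpha_2)$ and $v(k) = \min(\beta_1,\beta_2)$, and since $h,k$ are coprime, $v(hk) = v(h)+v(k) = \min(\alpha_1,\alpha_2) + \min(\beta_1,\beta_2)$. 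On the other side, $v(g) = \min\bigl(\alpha_1+\beta_1,\ \alpha_2+\beta_2,\ v(1-4n_1n_2)\bigr)$. The key input is Lemma~\ref{number1}, which lets me replace $1-4n_1n_2$ by $2(n_1-n_2)$ inside the gcd without changing it: from the displayed identities $(1-2n_1)(1+2n_2) = (1-4n_1n_2) - 2(n_1-n_2)$ and $(1+2n_1)(1-2n_2) = (1-4n_1n_2) + 2(n_1-n_2)$, one gets $v(1-4n_1n_2) \geq \min(\alpha_1+\beta_2,\ \beta_1+\alpha_2)$ when that minimum is relevant, and combined with $\min(\alpha_1+\beta_1,\alpha_2+\beta_2)$ this should collapse to exactly $\min(\alpha_1,\alpha_2)+\min(\beta_1,\beta_2)$.

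The main obstacle is the bookkeeping in that last step: one must check, over the cases determined by which of $\alpha_1,\alpha_2$ (resp.\ $\beta_1,\beta_2$) is zero, that
\[
\min\bigl(\alpha_1+\beta_1,\ \alpha_2+\beta_2,\ \alpha_1+\beta_2,\ \beta_1+\alpha_2\bigr) \;=\; \min(\alpha_1,\alpha_2)+\min(\beta_1,\beta_2).
\]
Since $\min(\alpha_i,\beta_i)=0$, at most two of $\alpha_1,\alpha_2,\beta_1,\beta_2$ are nonzero, and each of $1-4n_1^2$, $1-4n_2^2$ contributes at most one nonzero term; a short case check (e.g.\ $\alpha_1,\alpha_2 > 0$ forces $\beta_1=\beta_2=0$, so both sides equal $\min(\alpha_1,\alpha_2)$; if $\alpha_1>0,\beta_2>0$ then $\alpha_2=\beta_1=0$ and both sides equal $0$; etc.) finishes it. Alternatively, and perhaps more cleanly, I would deduce $g=hk$ directly from Lemma~\ref{number1} together with the coprimality of $h$ and $k$: $g = \gcd(1-4n_1^2, 1-4n_2^2, 1-4n_1n_2)$ divides $\gcd((1-2n_1)(1+2n_1), (1-2n_2)(1+2n_2))$, and using coprimality of each pair $1\pm 2n_i$ one shows $g = \gcd(1-2n_1,1-2n_2)\cdot\gcd(1+2n_1,1+2n_2)$ after verifying the cross terms $\gcd(1-2n_1, 1+2n_2)$ and $\gcd(1+2n_1, 1-2n_2)$ are absorbed — which is precisely what Lemma~\ref{number1}'s reduction to $2(n_1-n_2)$ accomplishes.
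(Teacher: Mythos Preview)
Your proposal is correct and follows essentially the same route as the paper: both arguments localize at an odd prime power, use that $1\pm 2n_i$ are coprime so that $v_p(1-4n_i^2)=\alpha_i+\beta_i$ with $\min(\alpha_i,\beta_i)=0$, and then sort out the cross term $1-4n_1n_2$ via the identities $(1\mp 2n_1)(1\pm 2n_2)=(1-4n_1n_2)\mp 2(n_1-n_2)$; the paper phrases the endgame as a direct case split on which of $1\pm 2n_i$ is divisible by $p^\ell$, while you package the same split in valuation notation. One simplification worth noting: your displayed identity
\[
\min\bigl(\alpha_1+\beta_1,\ \alpha_2+\beta_2,\ \alpha_1+\beta_2,\ \beta_1+\alpha_2\bigr)=\min(\alpha_1,\alpha_2)+\min(\beta_1,\beta_2)
\]
is just the general fact $\min_{i,j}(\alpha_i+\beta_j)=\min_i\alpha_i+\min_j\beta_j$ and needs no case analysis or constraint on the $\alpha_i,\beta_i$ at all, so once you have justified (via Lemma~\ref{number1} and the two identities) that $v_p(g)$ equals the left-hand side, you are done immediately.
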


\begin{proof}
The first statement follows since $1 - 2n_1$ and $1 + 2n_1$ are relatively prime.

Let $p^\ell$ be a prime power and if $p^\ell| hk$, then $p^\ell | h$ or $p^\ell | k$.
As $1 - 4n_i^2 = (1 + 2n_i)(1 - 2n_i)$, it follows that $p^\ell | (1 - 4n_i^2)$ for both $i$. Furthermore, working modulo $p^\ell$, if
$p^\ell | h$ we have $2 n_1 \equiv 2 n_2 \equiv 1$ so $4n_1 n_2 \equiv 1$. If $p^\ell | k$ we have
$2n_1 \equiv 2n_2 \equiv -1$ so also $4n_1 n_2 \equiv 1$. Either way, $p^\ell | (1 - 4n_1 n_2)$, which implies $p^\ell | g$.

Conversely, suppose that $p^\ell | g$. So $p^\ell | (1 - 2n_i)$ or $p^\ell | (1 + 2n_i)$,  for each $i$.
If $p^\ell$ divides both $1 - 2n_1$ and $1 - 2n_2$, then by subtracting we find that $p^\ell | 2(n_1-n_2)$.
Similarly, if $p^\ell$ divides both $1 + 2n_1$ and $1 + 2n_2$, we also find $p^\ell | 2(n_1-n_2)$. 
In either of these cases we have $p^\ell | g$, using Lemma~\ref{number1}.

If on the other hand $p^\ell$ divides both $1 - 2n_1$ and $1 + 2n_2$, then modulo $p^\ell$ we have
$$1 \equiv 4n_1 n_2 \equiv (2 n_1)(2n_2) \equiv (1)(-1)  \equiv -1 \; .$$
This is a contradiction, as $p$ is odd.
\end{proof}

Using the same methods, we obtain the following extensions to these results.

\begin{lemma} \label{number3}
Let $n_1, \dots, n_\ell$ be positive integers, greater than 1. Then $g_1 = g_2 = g_3$ where
\begin{align*}
g_1 &= \gcd \{ 1 - 4n_i^2, 1 -4n_i n_j  \mid  i,j \in \{1, \dots \ell \}  \} \\
g_2 &= \gcd \{ 1 - 4n_i^2, 2n_i - 2n_j  \mid  i,j \in \{1, \dots \ell \}  \} \\
g_3 &= \gcd \{ 1 - 4n_i^2, 1 - 4n_i n_j,  2n_i - 2n_j  \mid  i,j \in \{1, \dots \ell \}  \} 
\end{align*}
\end{lemma}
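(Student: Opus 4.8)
The plan is to reduce the statement to the two-variable computation already carried out in the proof of Lemma~\ref{number1}. First I would note that $g_3$ is the greatest common divisor of the union of the two generating sets defining $g_1$ and $g_2$, so $g_3 = \gcd(g_1, g_2)$; hence it suffices to prove $g_1 = g_2$, after which $g_3 = g_1 = g_2$ is automatic. Since $g_1$ and $g_2$ both divide $1 - 4n_1^2$, they are odd, so I would argue one odd prime power $p^k$ at a time, showing $p^k \mid g_1 \iff p^k \mid g_2$.

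For the implication $p^k \mid g_2 \Rightarrow p^k \mid g_1$: here $p^k \mid 2(n_i - n_j)$ for all $i, j$, and since $p$ is odd this gives $n_i \equiv n_j \pmod{p^k}$, whence $1 - 4n_i n_j \equiv 1 - 4n_i^2 \equiv 0 \pmod{p^k}$; together with $p^k \mid (1 - 4n_i^2)$ this yields $p^k \mid g_1$. This direction uses nothing from Lemma~\ref{number1}.

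For the converse, assume $p^k \mid g_1$. I would use the factorization $1 - 4n_i^2 = (1 - 2n_i)(1 + 2n_i)$: the two factors are relatively prime, so for odd $p$ the prime power $p^k$ divides exactly one of them, i.e. $2n_i \equiv \delta_i \pmod{p^k}$ with $\delta_i \in \{1, -1\}$. The condition $p^k \mid (1 - 4n_i n_j)$ then reads $\delta_i \delta_j \equiv (2n_i)(2n_j) \equiv 1 \pmod{p^k}$, and since $p^k \geq 3$ this forces $\delta_i = \delta_j$ for all $i, j$; consequently $2(n_i - n_j) = (2n_i) - (2n_j) \equiv 0 \pmod{p^k}$, which with $p^k \mid (1 - 4n_i^2)$ gives $p^k \mid g_2$. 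Equivalently, I could run the argument of Lemma~\ref{number1} verbatim on each pair $(n_i, n_j)$, via the identities $(1 - 2n_i)(1 + 2n_j) = (1 - 4n_i n_j) - 2(n_i - n_j)$ and $(1 + 2n_i)(1 - 2n_j) = (1 - 4n_i n_j) + 2(n_i - n_j)$.

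I do not anticipate a genuine obstacle: the whole argument is the prime-power bookkeeping around the single observation, already present in Lemma~\ref{number1}, that modulo an odd prime power dividing every $1 - 4n_i^2$ each $2n_i$ equals $\pm 1$, so that the conditions $1 - 4n_i n_j \equiv 0$ and $n_i - n_j \equiv 0$ become equivalent. The only point needing a moment's care is that one must pass through divisibility of the integers themselves (not merely their residues mod $p^k$) when returning from ``$p^k \mid 2(n_i - n_j)$ for all $i,j$'' to a statement about $g_2$ — but this is immediate since $g_2$ is by definition the gcd of those very integers.
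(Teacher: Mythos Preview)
Your proposal is correct and follows essentially the same approach the paper intends: the paper proves Lemma~\ref{number3} by the phrase ``using the same methods'' as Lemma~\ref{number1}, and your argument is precisely that reduction --- applying the pairwise prime-power analysis (via $(1 \mp 2n_i)(1 \pm 2n_j) = (1 - 4n_in_j) \mp 2(n_i - n_j)$, equivalently the $\delta_i \in \{\pm 1\}$ bookkeeping) to each pair $(n_i,n_j)$. Your preliminary observation that $g_3 = \gcd(g_1,g_2)$ is a clean way to organize the conclusion, though the paper's Lemma~\ref{number1} argument gives the biconditional $p^k \mid (1-4n_in_j) \iff p^k \mid 2(n_i-n_j)$ directly, which yields all three equalities at once.
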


\begin{lemma} \label{number4}
Let $n_1, \dots, n_\ell$ be positive integers, greater than 1. Then $\gcd(h,k) = 1$ and $g = hk$ where
\begin{align*}
g &= \gcd \{ 1 - 4n_i^2, 1 -4n_i n_j  \mid  i,j \in \{1, \dots \ell \}  \} \\
h&= \gcd \{ 1 - 2n_i   \mid  i \in \{1, \dots \ell \}  \} \\
k&= \gcd \{ 1 + 2n_i \mid  i \in \{1, \dots \ell \}  \} \\ \; 
\end{align*}
\end{lemma}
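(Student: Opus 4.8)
The plan is to follow the template of the proof of Lemma~\ref{number2}, promoting the two-variable argument to $\ell$ variables. There are two assertions to establish: $\gcd(h,k)=1$ and $g=hk$. For the first, I would observe that $h$ divides $1-2n_1$ and $k$ divides $1+2n_1$, while $\gcd(1-2n_1,\,1+2n_1)=1$ (both numbers are odd, and their difference is $4n_1$, so their gcd divides $\gcd(4n_1,2)=2$ and hence equals $1$). Thus $\gcd(h,k)\mid\gcd(1-2n_1,\,1+2n_1)=1$.

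For the second assertion, since $h$ and $k$ are relatively prime it is enough to show that for every prime power $p^e$ one has $p^e\mid g$ if and only if $p^e\mid hk$; moreover $g$, $h$, $k$ are all odd (each $1-4n_i^2$, each $1-4n_in_j$, and each $1\pm2n_i$ is odd), so $p$ is an odd prime throughout. For the ``if'' direction: if $p^e\mid hk$ then, by coprimality, $p^e$ divides $h$ or $k$; in the first case $2n_i\equiv1\pmod{p^e}$ for every $i$, whence $4n_in_j\equiv1\pmod{p^e}$ for all $i,j$, so $p^e$ divides every generator of $g$; the case $p^e\mid k$ is identical with $2n_i\equiv-1$. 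For the ``only if'' direction, suppose $p^e\mid g$. For each $i$ we then have $p^e\mid(1-2n_i)(1+2n_i)$, and since these two factors are coprime, exactly one of them is divisible by $p$, and that one is in fact divisible by $p^e$. I would then split into cases: if $p$ divides $1-2n_i$ for every $i$, then $p^e\mid h$; if $p$ divides $1+2n_i$ for every $i$, then $p^e\mid k$; and if there are indices $i$, $j$ with $p\mid(1-2n_i)$ and $p\mid(1+2n_j)$, then necessarily $i\ne j$ (else $p$ would divide the coprime pair $1\pm2n_i$), and modulo $p$ we get $4n_in_j\equiv(1)(-1)\equiv-1$, so $1-4n_in_j\equiv2$; but $p^e\mid g\mid(1-4n_in_j)$ then forces $p\mid2$, contradicting that $p$ is odd. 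Hence only the first two cases occur and $p^e\mid hk$, completing the argument.

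The whole proof is bookkeeping with $p$-adic valuations, and I do not anticipate a genuine obstacle; the one point needing care is the ``only if'' direction, where one must rule out the ``mixed'' configuration in which some index is of ``$h$-type'' and another of ``$k$-type'' — this is precisely where the cross-terms $1-4n_in_j$ among the generators of $g$ are used, and it is the reason the naive guess $g=\gcd\{1-4n_i^2\}$ would be too coarse. Lemma~\ref{number3} could be cited in place of the direct mixed-case computation, but the self-contained version above seems cleaner, and the degenerate case $\ell=1$ (where $g$, $h$, $k$ reduce to single absolute values) is covered automatically, since the mixed case then cannot arise.
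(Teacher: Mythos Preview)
Your proposal is correct and matches the paper's intended approach: the paper gives no separate proof of Lemma~\ref{number4} but simply says ``using the same methods'' as in Lemma~\ref{number2}, and your argument is exactly that promotion of the two-variable prime-power analysis to $\ell$ variables. The only stylistic difference is that in the converse direction the paper (for $\ell=2$) routes through Lemma~\ref{number1} via the differences $2(n_i-n_j)$, whereas you handle the mixed case directly with the congruence $1-4n_in_j\equiv 2\pmod p$; your self-contained version is arguably cleaner and, as you note, avoids invoking Lemma~\ref{number3}.
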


\end{document}